\documentclass[11pt]{amsart}

\usepackage{amsmath,amscd,amssymb,latexsym,color}
\usepackage[makeroom]{cancel}
\usepackage{tikz-cd}
\usepackage{longtable}

\input xy \xyoption{all}

\oddsidemargin = 0cm \evensidemargin = 0cm \textwidth = 16cm

\newcommand{\sm}{\left(\smallmatrix}
\newcommand{\esm}{\endsmallmatrix\right)}
\newcommand{\mat}{\begin{pmatrix}}
\newcommand{\emat}{\end{pmatrix}}
\renewcommand{\c}{\mathfrak{c}}
\renewcommand{\o}{\omega}

\newcommand{\Q}{\mathbb Q}
\newcommand{\Z}{\mathbb Z}
\newcommand{\C}{\mathbb C}

\renewcommand{\H}{\mathbb H}
\renewcommand{\P}{\mathbb P}

\newtheorem{thm}{Theorem}
\newtheorem{lem}[thm]{Lemma}

\newtheorem{prop}[thm]{Proposition}

\newtheorem{alg}[thm]{Algorithm}

\theoremstyle{definition}

\newtheorem{rmk}[thm]{Remark}
\numberwithin{equation}{section}
\numberwithin{thm}{section}

\begin{document}

\title[Tetraelliptic modular curves $X_1(N)$]{Tetraelliptic modular curves $X_1(N)$}

\author{Daeyeol Jeon}
\address{Department of Mathematics Education, Kongju National University, Gongju, 32588 South Korea}
\email{dyjeon@kongju.ac.kr}

\thanks{This research was supported by Basic Science Research Program through the National Research Foundation of Korea (NRF) funded by the Ministry of Education (2019R1F1A1060149, 2022R1A2C1010487).}

\begin{abstract} In this paper, we determine all tetraelliptic modular curves $X_1(N)$ over $\Q$, and find some tetraelliptic maps $\phi_N$ from $X_1(N)$ to elliptic curves for those tetraelliptic $X_1(N)$. Also we will construct $\phi_N$ explicitly as rational functions. Moreover, we will show that all $\phi_N$ we found are Galois and find elliptic curves with torsion subgroup $\Z/17\Z$ over cyclic quartic number fields by using the cyclic map $\phi_{17}$.  
\end{abstract}
\maketitle

\renewcommand{\thefootnote}%
             {}
 {\footnotetext{
 2020 {\it Mathematics Subject Classification}: 11G18, 11G30
 \par
 {\it Keywords}: tetraelliptic modular curves; tetraelliptic maps}

\section{Introduction}\label{sec:Introduction}


Let $X$ be a smooth, projective curve over a number field $K$ of genus $g(X)\geq 2$.
Suppose that there is a map $\phi:X\to E$ of degree $d$ for some elliptic curve $E$.
Then $X$ is said to be {\it bielliptic}, {\it trielliptic}, and {\it tetraelliptic} if $d=2,3$, and $4$, respectively.

For a positive integer $N$, the modular curve $X_1(N)$ (with cusps removed) parametrizes isomorphism classes of pairs $(E,P)$, where $E$ is an elliptic curve and $P$ a torsion point of order $N$ on $E$.

The first author and Kim \cite{JK} determined all bielliptic curves $X_1(N)$.
Recently, the author \cite{J} determined all trielliptic curves $X_1(N)$ over $\Q$, i.e. they admit $\Q$-rational {\it trielliptic maps} from $X_1(N)$ to elliptic curves $E$ over $\Q$ of degree 3.
Also he construct explicit $\Q$-rational trielliptic maps from trielliptic $X_1(N)$ over $\Q$ to elliptic curves.
Interestingly, he rediscovered the Najman's elliptic curve labelled 162B1 over $\Q(\zeta_9)^+$ which is an elliptic
curve over a cubic number field with torsion $\Z/21\Z$ in constructing a trielliptic map from $X_1(21)$ to an elliptic curve.
Indeed, the elliptic curve 162B1 corresponds to a sporadic point on $X_1(21)$ of degree $3$. 
In general, a point on $X_1(N)$ is called a point of degree $n$ if it is defined over a number field of degree $n$ over $\Q$.
In particular, we call a point of degree $4$ a {\it quartic point}.

In this paper, we determine all tetraelliptic modular curves $X_1(N)$ over $\Q$, and find some tetraelliptic maps $\phi_N$ from tetraelliptic $X_1(N)$ to elliptic curves for those tetraelliptic $X_1(N)$.
Also we will construct $\phi_N$ explicitly as rational functions.  
Constructing explicit expressions of maps from an algebraic curve with a high genus to an elliptic curve seems to be very meaningful from a computational point of view. 
At least the author could not find any literature showing these explicit expressions except for the author's previous work \cite{J}.
Moreover, we could show that all $\phi_N$ we found are Galois and find elliptic curves with torsion subgroup $\Z/17\Z$ over cyclic quartic number fields by using the cyclic map $\phi_{17}$.   

Our main theorem is as follows:
\begin{thm}\label{main} $X_1(N)$ is tetraelliptic over $\Q$ if and only if $N=17,20,24,26,28,30,32$.
Moreover, they all have $\Q$-rational Galois tetraelliptic maps, i.e. they admit $\Q$-rational Galois maps of degree $4$ from $X_1(N)$ to elliptic curves.
Especially, they are cyclic for $N=17,20,26,30$ and $32$, and the others are not.
\end{thm}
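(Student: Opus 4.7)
The plan is to parallel the approach of \cite{JK,J}, splitting the statement into (A) the classification of $N$, (B) explicit construction of the maps $\phi_N$ for each such $N$, and (C) the verification that each $\phi_N$ is Galois with the stated cyclic or non-cyclic structure.

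For (A), I would first dispose of $N$ with $g(X_1(N))\leq 1$ and then use two complementary restrictions on the remaining $N$. On the geometric side, Castelnuovo--Severi applied to a putative tetraelliptic map together with a $\Q$-gonality pencil of degree $d$ gives $g(X_1(N))\leq 3(d-1)+4$; combined with the linear lower bound on $\Q$-gonalities of modular curves and the explicit gonality tables of Derickx--van Hoeij, this reduces the problem to a finite and rather short list of $N$. On the arithmetic side, a $\Q$-rational tetraelliptic map forces a $\Q$-rational elliptic isogeny factor of $J_1(N)$, and the degree of the induced modular parametrisation $X_1(N)\to E$ must actually equal $4$. For each remaining $N$ I would read the $\Q$-isogeny decomposition of $J_1(N)$ from the LMFDB, locate the elliptic factors, and compute the minimal degree of the associated parametrisation, thereby eliminating all $N$ not in the claimed list.

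For (B) and (C), for each $N\in\{17,20,24,26,28,30,32\}$ I would exhibit $\phi_N$ using the diamond-operator action of $G_N:=(\Z/N\Z)^*/\{\pm 1\}$ on $X_1(N)$. Given an index-$4$ subgroup $H\leq G_N$, the quotient $X_H:=X_1(N)/H$ is again a modular curve, and when $g(X_H)=1$ the projection $X_1(N)\to X_H$ is a Galois tetraelliptic map with group $G_N/H$. For $N=17,20,26,30,32$ I expect to find $H$ making $G_N/H$ cyclic of order $4$, while for $N=24,28$ the only workable $H$ yields $G_N/H\simeq V_4$; this delivers claim (C). To realise $\phi_N$ as a concrete rational function I would lift a pair of generators of the function field of $X_H$ to modular functions for $\G_1(N)$ (via Siegel units or $\eta$-quotients), derive a Weierstrass model for the elliptic quotient, and verify the degree by a divisor computation at the cusps.

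The main obstacle will be part (A): ruling out tetraellipticity for mid-range values of $N$ (those in the twenties and thirties not on the list). The gonality bound alone is too weak, since a curve may admit a degree-$4$ map to an elliptic curve while having $\Q$-gonality strictly greater than $4$. Handling these cases seems to require a case-by-case Castelnuovo--Severi argument applied to two distinct hypothetical maps, together with a direct modular-degree computation for each elliptic factor of $J_1(N)^{\mathrm{new}}$. A secondary subtlety in (B) is ensuring that the constructed map realises degree exactly $4$ rather than a proper multiple, which should be settled by a divisor count for a generator of the function field of $X_H$.
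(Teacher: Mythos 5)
Your plan for the classification part is broadly workable and close in spirit to the paper's: the paper also combines a gonality bound (tetraelliptic implies $\mathrm{Gon}_\Q\le 8$, against Abramovich's lower bound and the Derickx--van Hoeij tables) with conductor constraints and, for the mid-range cases $N=19,21,27,36$, a divisibility argument via Stevens' $X_1(N)$-optimal curve: the putative degree-$4$ map must factor through the optimal parametrization, whose degree therefore divides $4$ and also divides the degree of some other known map, forcing a contradiction. Note one small correction there: the degree of the optimal parametrization need only \emph{divide} $4$, not equal it.

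The genuine gap is in your construction step (B)/(C). You propose to realise every $\phi_N$ as a quotient by an order-$4$ subgroup of the diamond-operator group $(\Z/N\Z)^*/\{\pm1\}$. This works for $N=17,20,24,32$, but it cannot work for $N=26,28,30$. For $N=26$ and $28$ the group $(\Z/N\Z)^*/\{\pm1\}$ has order $6$, so it has no subgroup of order $4$ at all; for $N=30$ it has order $4$, but the corresponding quotient is $X_0(30)$, which has genus $3$, not $1$. In each of these cases one must leave the diamond operators and use automorphisms coming from the normalizer of $\Gamma_1(N)$: for $N=26$ the Atkin--Lehner matrix $W_2$ acts on $X_1(26)$ with order $12$ and the tetraelliptic map is the quotient by $\langle W_2^3\rangle$ (its existence being established first by the optimal-curve argument, using that the $X_0(26)$- and $X_1(26)$-optimal curves $26$A$1$ and $26$A$3$ differ by a $3$-isogeny, so that the degree-$12$ map $X_1(26)\to 26\mathrm{A}1$ forces $X_1(26)\to 26\mathrm{A}3$ to have degree $4$); for $N=30$ one quotients by the Klein four-group $\langle[11],[7]W_6\rangle$ inside the dihedral group generated by $[7]$ and $W_6$, landing on a curve of conductor $15$; and for $N=28$ the map is $X_1(28)\to X_1(14)$ with Galois group generated by $[13]$ and the unipotent matrix $\sm 1&0\\14&1\esm$, which is not a diamond operator. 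Without these extra automorphisms your method produces no candidate map for three of the seven values of $N$, so both the existence of the tetraelliptic maps and the cyclic/non-cyclic dichotomy in part (C) are left unproved for $N=26,28,30$.
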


We would like to make a few additional comments on the torsion subgroups of elliptic curves.
As is well known, Mazur \cite{M} determined all possible group structures of the torsion subgroups of elliptic curves over $\Q$.
Kamienny \cite{K}, Kenku and Momose \cite{KM} completed the same work as Mazur over quadratic number fields.

After these results, attention shifted toward number fields of higher degrees. 
Jeon, Kim and Schweizer \cite{JKS1} found all the torsion groups that appear infinitely often as one runs through all elliptic curves over all cubic number fields as follows:
\begin{equation}\label{JKS} \begin{array}{ll}
    \Z/N\Z, & N=1-16,18,20, \\
    \Z/2\Z\oplus \Z/2N'\Z, & N'=1-7.\\
\end{array} \end{equation}

Since Najman's elliptic curve was discovered in \cite{N}, it was formally conjectured in \cite{W} that the only possible torsion groups for elliptic curves over cubic number fields are the ones in \eqref{JKS} and $\Z/21\Z$.
Finally, Derickx, Etropolski, van Hoeij, Morrow, and Zureick-Brown \cite{DE} proved this conjecture, and hence the torsion group $E(K)_{tors}$ of an elliptic curve $E$ over a cubic number field $K$ must be isomorphic to one of the following $26$ types:
\begin{equation}\label{DEHMZ} \begin{array}{ll}
    \Z/N\Z, & N=1-16,18,20, 21\\
    \Z/2\Z\oplus \Z/2N'\Z, & N'=1-7.\\
\end{array} \end{equation}

Over quartic number fields $K$, it is proved in~\cite{JKP} that all the group structures occurring infinitely often as torsion groups
$E(K)_{\operatorname{tors}}$ are exactly the following $38$ types:
\begin{equation}\label{JKP}\begin{array}{ll}
    {\mathbb Z}/N_1{\mathbb Z}, & N_1=1-18,20,21,22,24 \\
    \Z/2\Z\oplus \Z/2N_2\Z, & N_2=1-9 \\
    \Z/3\Z\oplus \Z/3N_3\Z, & N_3=1-3 \\
    \Z/4\Z\oplus \Z/4N_4\Z, & N_4=1-2\\
    \Z/5\Z\oplus \Z/5\Z, & \\
    \Z/6\Z\oplus \Z/6\Z, &
\end{array} \end{equation}
where $K$ varies over all quartic number fields and $E$ varies over all elliptic curves over $K$.
In the case of quartic number fields yet, a complete classification of torsion subgroups has not been achieved. 
Unlike the cubic number fields, the sporic points on $X_1(N)$ of degree $4$ have not been discovered yet, 
so there is still a possibility that \eqref{JKP} will be a full classification over quartic number fields, and it is very interesting to see how this result ends.

Lastly, we will give a comment about our construction of elliptic curves with torsion subgroup $\Z/17\Z$ over cyclic quartic number fields.
Chou \cite{C} proved that if $E$ is an elliptic curve defined over $\Q$ and $K$ is a Galois quartic extension of $\Q$, then $E(K)_{\operatorname{tors}}$ is one of the following $27$ types:
\begin{equation}\label{Ch}\begin{array}{ll}
    {\mathbb Z}/N_1{\mathbb Z}, & N_1=1-16,N_1\neq 11,14 \\
    \Z/2\Z\oplus \Z/2N_2\Z, & N_2=1-6,8 \\
    \Z/3\Z\oplus \Z/3N_3\Z, & N_3=1,2 \\
    \Z/4\Z\oplus \Z/4N_4\Z, & N_4=1,2\\
    \Z/5\Z\oplus \Z/5\Z, & \\
    \Z/6\Z\oplus \Z/6\Z, &
\end{array} \end{equation}
The above result implies that our elliptic curves with torsion $\Z/17\Z$ over cyclic quartic number fields have no $\Q$-rational Weierstrass equations.


\section{Preliminaries}\label{sec:Preliminaries}

Let $\Gamma(1)={\rm SL}_2(\mathbb Z)$ be the full modular group. 
For any positive integer $N$, we have subgroups $\Gamma(N)$, $\Gamma_1(N)$ and $\Gamma_0(N)$ of $\Gamma(1)$ defined by matrices $\sm a&b\\c&d\esm$ that are congruent modulo $N$ to $\sm 1&0\\0&1\esm$, $\sm 1&*\\0&1\esm$ and $\sm *&*\\0&*\esm$, respectively. 
We let $X(N)$, $X_1(N)$ and $X_0(N)$ be the modular curves defined over $\mathbb Q$ associated with $\Gamma(N)$, $\Gamma_1(N)$ and $\Gamma_0(N)$, respectively.
There are some more modular curves $X_\Delta(N)$ associated with the subgroups $\Gamma_\Delta(N)$ of $\Gamma_0(N)$ defined by matrices $\sm a&b\\c&d\esm$ with $a\in\Delta$, where $\Delta$ is a subgroup of $(\Z/N\Z)^*$ that contains $-1$. For $\Delta=\{\pm1\}$, this is $X_1(N)$.  

Now we introduce a special form of an elliptic curve that is used frequently to describe a pair $(E,P)$ which corresponds to a non-cuspidal point of $X_1(N)$ (cf. \cite{H}).
The {\it Tate normal form} of an elliptic curve with a point $P=(0,0)$ is
$$E=E(b,c) : Y^2+(1-c)XY -bY=X^3 -bX^2,$$ and this is nonsingular if and only if $b\neq0$.
On the curve $E(b,c)$, we can use the chord-tangent method to derive the following:
\begin{align}\label{eq:nP}
P=&(0,\ 0),\\\notag
2P =&(b, \ bc),\\\notag
3P =&(c, \ b-c),\\\notag
4P =&\left(r(r-1), \ r^2(c-r+1)\right); \ \ b=cr,\\\notag
5P =& \left(rs(s-1), \ rs^2(r-s)\right); \ \ c=s(r-1), \\ \notag
6P =& \left( \frac{s(r-1)(r-s)}{(s-1)^2}, \ \frac{s^2(r-1)^2(rs-2r+1)}{(s-1)^3}  \right) \notag
\end{align}

The condition $NP= O$ in $E(b,c)$ gives a defining equation for $X_1(N)$.
For example, $11P=O$ implies $5P=-6P$, so
$$X_{5P}=X_{-6P}=X_{6P},$$
where $X_{nP}$ denotes the $X$-coordinate of the $n$-multiple $nP$ of $P$.
\eqref{eq:nP} implies that
\begin{equation}\label{mod13}
rs(s-1) = \frac{s(r-1)(r-s)}{(s-1)^2}.
\end{equation}
Without loss of generality, the cases $s=0,1$ may be excluded.
Then, \eqref{mod13} becomes
\begin{align*}
F_{11}(r,s):=&\, rs^3 - 3rs^2 - r^2 + 4rs - s=0,
\end{align*}
which is one of the equations for $X_1(11)$, called the {\it raw form} of
$X_1(11)$.  
By the coordinate changes $r =1+ xy$ and $s=1-x$, we have that
$$f_{11}(x,y):=y^2 + (x^2 + 1)y + x=0.$$
This solves the moduli problem of $X_1(11)$.
If we pick $x_0=-1$, and set $y_0=-1+\sqrt{2}$, then $(x_0,y_0)$ is a $K$-rational point on $X_1(11)$ satisfying $f_{11}(x_0,y_0)=0$, where $K=\Q(\sqrt{2})$ is a quadratic number field.
If we apply the formulas in \cite[Table 7]{Su} and \eqref{eq:nP} with $x=x_0$ and $y=y_0$, we obtain
\begin{align*}
b_0:=b(x_0,y_0)=&8 - 6\sqrt{2},\\ c_0:=c(x_0,y_0)=&2 - 2\sqrt{2}.
\end{align*}
Then, the elliptic curve $E(b_0,c_0)$ over $K$ contains the point $(0,0)$ of order $11$, and in fact its torsion subgroup is $\Z/11\Z$.

From~\cite{Su}, we obtain the defining equations of $X_1(N)$ in \cite[Table 6]{Su} and birational maps $\varphi$ for $X_1(N)$ from $f_N(x,y) = 0$ to $F_N(r, s) = 0$ in \cite[Table 7]{Su} for $N\leq 30$, where $F_N(r, s) = 0$ denotes the raw form of $X_1(N)$.

Now we explain another method to solve the moduli problem of $X_1(N)$ introduced by Baaziz \cite{B}.
Let $\H$ be the complex upper half plane and $\H^*=\H\cup\P^1(\Q)$.
Then, $\Gamma_1(N)$ acts on $\H^*$ under linear fractional transformations, and $X_1(N)(\C)$ can be viewed as a Riemann surface $\Gamma_1(N)\backslash\H^*$.

The points of $\Gamma_1(N)\backslash \H$ have a one-to-one correspondence with the equivalence classes of elliptic curves $E$, together with a specified point $P$ of exact order $N$.
Let $L_\tau=[\tau,1]$ be the lattice in $\C$ with basis $\tau$ and $1$.
Then, $[\tau]\in\Gamma_1(N)\backslash \H$ corresponds to the pair $\left[\C/L_\tau,\frac{1}{N}+L_\tau\right]$.
Thus, $\Gamma_1(N)\backslash \H$ is a moduli space for the moduli problem of determining equivalence classes of pairs $(E,P)$, where $E$ is an elliptic curve defined over $\C$, and $P\in E$ is a point of exact order $N$.
Two pairs $(E,P)$ and $(E',P')$ are equivalent if there is an isomorphism $E\simeq E'$ that takes $P$ to $P'$.

Note that
\begin{align}\label{eq}\notag
&\left[\C/L_\tau, \,  \frac{1}{N}+L_\tau\right]\\
 &=  [y^2=4x^3-g_2(\tau)x-g_3(\tau),(\wp(\frac{1}{N},\tau),\wp'(\frac{1}{N},\tau))] \\ \notag
&=\left[y^2+(1-c(\tau))xy-b(\tau)y=x^3-b(\tau)x^2,\, (0,0)\right],
\end{align}
where $g_2(\tau)=60G_4(\tau)$, $g_3(\tau)=140G_6(\tau)$ for the Eisenstein series $G_{2k}(\tau)$ of weight $2k$, $\wp(z,\tau):=\wp(z, L_\tau)$ is the Weierstrass elliptic function, and $b(\tau)$, $c(\tau)$ are the coefficients of the Tate normal form contained in $\left[\C/L_\tau, \,  \frac{1}{N}+L_\tau\right]$.
Also, if we let
$$a(\tau)=\frac{1}{12}\left\{(1-c(\tau))^2-4b(\tau)\right\},$$
then we have the following relations:
\begin{align*}
g_2(\tau)=&12a(\tau)^2 + 2b(\tau)(1-c(\tau))\\
g_3(\tau)=&-8a(\tau)^3 - 2a(\tau)b(\tau)(1-c(\tau)) - b(\tau)^2,
\end{align*}
which gives the last equality of \eqref{eq} (See \cite{B} for more details.).

Note that each equivalence class of pairs $(E,P)$ contains a unique Tate normal form \cite[Proposition 1.3]{B}, and hence $b(\tau)$ and $c(\tau)$ induce well-defined functions on $\Gamma_1(N)\backslash \H$.
From \cite{B}, it follows that
\begin{align}\label{eq:b-c}
b(\tau)&=-\frac{(\wp(\frac{1}{N},\tau)-\wp(\frac{2}{N},\tau))^3}{\wp'(\frac{1}{N},\tau)^2},\\ \notag
c(\tau)&=-\frac{\wp'(\frac{2}{N},\tau)}{\wp'(\frac{1}{N},\tau)}
\end{align}
are modular functions on $\Gamma_1(N)$ and
generate the function field of $X_1(N)$, where $\wp'$ is the derivative with respect to $z$.
Thus, for each non-cuspidal point $\tau\in X_1(N)$, an elliptic curve $E(b(\tau),c(\tau))$ with $N$-torsion point $(0,0)$ corresponds.

Finally, we decribe some automorphisms of $X_\Delta(N)$.
Note that $X_\Delta(N)\to X_0(N)$ is a Galois covering with Galois group $\Gamma_0(N)/\Gamma_\Delta(N)$ which is isomorphic to $(\Z/N\Z)^*/\Delta$. 
For an integer $a$ prime to $N,$ let $[a]$ denote the automorphism of $X_\Delta(N)$ represented by $\gamma\in\Gamma_0(N)$ such that $\gamma\equiv\sm a&*\\0&*\esm\mod N.$ Sometimes we regard $[a]$ as
a matrix.

For each divisor $d|N$ with $(d,N/d)=1,$ consider the matrices of the form $\begin{pmatrix} dx & y\\Nz & dw \end{pmatrix}$ with
$x,y,z,w\in\mathbb Z$ and determinant $d.$ 
Then these matrices define a unique involution of $X_0(N)$, which is called the {\it Atkin-Lehner involution} and denoted by $W_d$. 
In particular, if $d=N,$ then $W_N$ is called the {\it Fricke involution.}
We also denote by $W_d$ a matrix of the above form.

If we fix a matrix $W_d$ then $W_d$ may not belong to the normalizer $N(\Gamma_\Delta(N))$ of $\Gamma_\Delta(N)$ in ${\rm PSL}_2(\mathbb R)$ and might therefore not define an automorphism of $X_\Delta(N)$.
However $W_d$ is always contained in $N(\Gamma_1(N))$; hence it defines an automorphism of $X_1(N)$.
But it may not define an involution of $X_1(N)$.
For example, $W_2=\begin{pmatrix} 2 & 1\\26 & 14 \end{pmatrix}$ defines an automorphism of $X_1(26)$ of order $8$.
Moreover, if $a$ is running through $(\Z/N\Z)^*/\Delta$, then $[a]W_d$ gives the different automorphisms of $X_{\Delta}(N)$ that induce the same Atkin-Lehner involution $W_d$ on $X_0(N)$.

For square-free $N$, Momose claimed that the automorphism group ${\rm Aut}(X_\Delta(N))$ is equal to $N(\Gamma_\Delta(N))/\Gamma_\Delta(N)$
in his unpublished work \cite{Mo}.
But the author, Kim and Schweizer \cite{JKS2} found a counterexample for the case $X_\Delta(37)$ where $\Delta = \{\pm1, \pm6, \pm8, \pm10, \pm11, \pm14\}$.
Therefore, it can be concluded that the automorphism group of $X_1(N)$ has not yet been completely determined even for square-free $N$.



\section{Non-tetraelliptic curves}\label{sec:non-tetraelliptic}

For determining non-tetraelliptic curves $X_1(N)$, we first give lists of rational, elliptic, bielliptic or trielliptic curves $X_1(N)$ as follows:

\begin{thm}\label{list} {\rm (\cite{JK,J})} The following holds:
\begin{enumerate}
\item[(1)] $X_1(N)$ is rational if and only if $N=1-10,12$.
\item[(2)] $X_1(N)$ is elliptic if and only if $N=11,14,15$.
\item[(3)] $X_1(N)$ is bielliptic if and only if $N=13,16,17,18,20,21,22,24$
\item[(4)] $X_1(N)$ is trielliptic over $\Q$ if and only if $N=19,21,22,27,30$
\end{enumerate}
\end{thm}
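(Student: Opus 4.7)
The plan is to prove the four parts in the order they are stated, in increasing difficulty. Parts (1) and (2) reduce to a genus computation plus the existence of a $\Q$-rational cusp; parts (3) and (4) each require both an existence direction (constructing the required map) and a nonexistence direction (eliminating all other $N$).

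For (1) and (2), compute $g(X_1(N))$ from the standard formula obtained by applying Riemann--Hurwitz to $X_1(N)\to X(1)$, together with the fact that $\Gamma_1(N)$ has no elliptic points for $N\ge 4$ and an explicit count of cusps. A direct enumeration gives $g(X_1(N))=0$ precisely for $N\in\{1,\dots,10,12\}$ and $g(X_1(N))=1$ precisely for $N\in\{11,14,15\}$. Since the cusp $\infty$ is always $\Q$-rational, every genus-$0$ $X_1(N)$ is isomorphic to $\P^1_\Q$, and every genus-$1$ $X_1(N)$ is an elliptic curve over $\Q$ (the known models being the curves $11A3$, $14A4$, $15A8$). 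This settles (1) and (2).

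For (3) and (4), in which $g(X_1(N))\ge 2$, the strategy is the same for both degrees $d=2$ and $d=3$. On the existence side, for each listed $N$ produce a subgroup $H$ of $\mathrm{Aut}_\Q(X_1(N))$ of order $d$ --- typically generated by a product $[a]W_e$ of a diamond automorphism and an Atkin--Lehner element for some $e\mid N$ with $(e,N/e)=1$ --- and check via Riemann--Hurwitz, counting fixed points through the modular interpretation on pairs $(E,P)$, that $X_1(N)/H$ has genus $1$ and a $\Q$-rational point; more generally, realize the quotient as some $X_\Delta(N)$ of genus $1$. On the nonexistence side, apply the Castelnuovo--Severi inequality
$$g(X_1(N))\le d_1\,g(C_1)+d_2\,g(C_2)+(d_1-1)(d_2-1),$$
to the hypothetical degree-$d$ map $X_1(N)\to E$ paired with a known map of coprime degree (for instance $X_1(N)\to X_0(N)$ or a projection coming from a modular function of small pole order), together with sharp lower bounds on the $\C$- or $\Q$-gonality of $X_1(N)$, to force all but the listed $N$ out. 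A few borderline $N$ are handled by explicit analysis of $\mathrm{Aut}_\Q(X_1(N))$ and of all possible genus-$1$ quotients of the right degree.

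The main obstacle is the nonexistence direction in (4). A degree-$3$ cover of an elliptic curve need not be Galois, so one cannot simply inspect index-$3$ subgroups of the automorphism group; one must reason through Castelnuovo--Severi and gonality bounds, which for the precise borderline values of $N$ require sharp input (lower bounds on $\Q$-gonality via reduction mod small primes, and possibly computer-assisted ruling out of degree-$3$ maps from explicit equations of $X_1(N)$ such as those of \cite{Su}). A second subtlety is that the classification is over $\Q$ rather than geometrically: even if $X_1(N)$ admits a trielliptic map over $\overline{\Q}$, one must check that the target elliptic curve and the map descend to $\Q$, which adds a Galois-cohomological verification for each surviving candidate.
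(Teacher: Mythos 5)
This theorem is not proved in the paper at all: it is quoted from \cite{JK} and \cite{J}, so there is no internal proof to compare your attempt against. Judged on its own terms, your outline does follow the general strategy of those two references --- genus enumeration for (1)--(2), and for (3)--(4) a combination of Abramovich-type gonality lower bounds (Proposition \ref{lambda} here), Castelnuovo--Severi, and inspection of modular automorphisms $[a]W_e$ to produce genus-one quotients. One small point you handle correctly but should make explicit: part (3) is a \emph{geometric} classification (indeed $X_1(21)$ is bielliptic but not over $\Q$, as the paper notes), whereas part (4) is a classification over $\Q$, so the descent issue you raise at the end pertains to (4) only.

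The genuine gap is in the nonexistence direction of (4) for the borderline $N$, which you defer to ``explicit analysis'' without naming the tool that actually closes those cases. Since a trielliptic map need not be Galois, and since Castelnuovo--Severi plus gonality only eliminates large $N$, the remaining candidates are handled arithmetically: the target elliptic curve of a $\Q$-rational map from $X_1(N)$ has conductor dividing $N$, and by Stevens' theorem on $X_1(N)$-optimal curves (Proposition \ref{optimal} in this paper) any such map factors through the optimal curve of the relevant isogeny class, forcing degree divisibilities that rule out degree $3$ for specific $N$. This is exactly the mechanism the present paper uses in Section \ref{sec:non-tetraelliptic} to exclude $N=19,27,36,21$ in the tetraelliptic setting, and the analogous argument is what \cite{J} needs for the trielliptic case; without it your plan cannot terminate. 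A second, smaller gap: your existence direction for (3) implicitly assumes every bielliptic involution is a modular automorphism $[a]W_e$, but $\mathrm{Aut}(X_1(N))$ is not known to consist only of such elements (the paper cites the counterexample $X_\Delta(37)$); for $g\geq 6$ one instead uses the uniqueness and centrality of a bielliptic involution to reduce to the modular ones, and the low-genus cases require separate treatment. As written, the proposal is a credible plan but not a proof.
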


Let $X$ be a smooth, projective curve over a field $K$.
The $K$-gonality ${\rm Gon}_K(X)$ of $X$ is the minimal degree of a finite $K$-rational map $X\to \P^1$.
\begin{prop} {\rm (\cite{A}, \cite{Ki})} \label{lambda} Let $\Gamma\subseteq {\rm SL}_2(\Z)$ be a congruence subgroup.
The $\C$-gonality of the modular curve $X(\Gamma)$ is at least $\frac{\lambda_1}{24}[{\rm SL}_2(\Z) : \Gamma]$, where $\lambda_1 \geq \frac{975}{4096}$.
\end{prop}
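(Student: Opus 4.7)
The plan is to combine the Li-Yau conformal volume inequality from differential geometry with the spectral gap bound on the Laplace eigenvalue $\lambda_1$ for congruence quotients of the upper half plane; these are exactly the two ingredients behind Abramovich's bound in \cite{A} and the Kim-Sarnak refinement in \cite{Ki}.

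First I would recall the Li-Yau inequality in the form needed here: for a compact Riemann surface $X$ of genus $\geq 2$ equipped with any conformal metric and any non-constant holomorphic map $\phi:X\to\P^1$ of degree $d$, one has
\[ 8\pi d \geq \lambda_1(X)\cdot\mathrm{vol}(X), \]
where $\lambda_1(X)$ is the smallest positive eigenvalue of the Laplace-Beltrami operator on $X$. I would then endow $X(\Gamma)$ with the hyperbolic metric $\frac{dx\,dy}{y^2}$ pushed down from $\H$. The classical fundamental-domain computation gives $\mathrm{vol}(\mathrm{SL}_2(\Z)\backslash\H)=\pi/3$, and by multiplicativity of volume in coverings the area of $X(\Gamma)$ equals $\frac{\pi}{3}[\mathrm{SL}_2(\Z):\Gamma]$ (up to a harmless factor of two depending on whether $-I\in\Gamma$). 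Substituting into the Li-Yau inequality and solving for $d$ yields
\[ d \geq \frac{\lambda_1\cdot[\mathrm{SL}_2(\Z):\Gamma]}{24}, \]
which is the asserted bound for every degree of a non-constant map $X(\Gamma)\to\P^1$, hence for the $\C$-gonality.

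The final ingredient, and the real depth of the proposition, is the Kim-Sarnak lower bound $\lambda_1\geq 975/4096$ for every congruence subgroup. This follows from the functorial symmetric fourth power lift for $\mathrm{GL}_2$ and the resulting $7/64$ bound on the archimedean Satake parameters, via the identity $1/4-(7/64)^2=975/4096$; I would cite this as a black box. The main obstacle is precisely this spectral estimate: the Li-Yau step and the volume computation are textbook once one has the hyperbolic geometry of $\Gamma\backslash\H$ in hand, whereas the Kim-Sarnak bound is a deep consequence of automorphic lifting theorems that I would not attempt to reprove. Any future improvement in the Selberg eigenvalue conjecture (ultimately $\lambda_1\geq 1/4$) would automatically strengthen the constant in the proposition.
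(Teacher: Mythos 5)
The paper does not prove this proposition; it quotes it directly from Abramovich \cite{A} with the improved eigenvalue bound of Kim--Sarnak \cite{Ki}, so there is no in-paper argument to compare against. Your reconstruction correctly identifies the two ingredients those references actually use: the Li--Yau conformal volume inequality $\lambda_1(X)\,\mathrm{vol}(X)\le 8\pi d$ for a degree-$d$ map to $\P^1$, the volume formula $\mathrm{vol}(X(\Gamma))=\frac{\pi}{3}[\mathrm{PSL}_2(\Z):\bar\Gamma]$, and the spectral bound $\lambda_1\ge \frac14-\left(\frac{7}{64}\right)^2=\frac{975}{4096}$ from the symmetric fourth power functoriality. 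The arithmetic checks out and the division gives exactly $d\ge \frac{\lambda_1}{24}[\mathrm{SL}_2(\Z):\Gamma]$.

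One point you pass over too quickly: the Li--Yau inequality as you state it is for compact surfaces with a smooth metric, whereas $\Gamma\backslash\H$ is noncompact with cusps (and elliptic points), and its compactification carries a hyperbolic metric that is singular there; moreover $\lambda_1$ for the noncompact quotient must be interpreted as the bottom of the nonzero spectrum, which includes continuous spectrum beginning at $\frac14$. Abramovich's paper devotes real work to justifying the transfer of the Li--Yau bound to this setting (truncation near the cusps and an approximation argument), so calling the Li--Yau step ``textbook once one has the hyperbolic geometry in hand'' understates the content of \cite{A}. Since you are explicitly black-boxing the deep inputs, this is a presentational gap rather than a mathematical error, but a complete proof would have to address it.
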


Suppose $X$ is tetraelliptic, and there exists a $\Q$-rational tetraelliptic map $X\to E$ for some elliptic curve $E$.
Since there exists a $\Q$-rational map $E\to\P^1$ of degree 2, ${\rm Gon}_\Q(X)\leq 8$.
Since ${\rm Gon}_\C(X)\leq {\rm Gon}_\Q(X)$, applying Proposition \ref{lambda}, we have the following:

\begin{lem}\label{lem37}
$X_1(N)$ is not tetraelliptic over $\Q$ for $N=41,43,45,47$, and $N\geq 49$.
\end{lem}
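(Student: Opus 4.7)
The plan is to combine Proposition \ref{lambda} with the elementary observation that a tetraelliptic curve always admits a map to $\P^1$ of degree at most $8$. Concretely, if $X_1(N)$ is tetraelliptic over $\Q$, composing a $\Q$-rational degree-$4$ map $X_1(N)\to E$ with the $2{:}1$ projection $E\to\P^1$ yields ${\rm Gon}_\Q(X_1(N))\leq 8$, hence ${\rm Gon}_\C(X_1(N))\leq 8$. So it suffices to show that the lower bound on $\C$-gonality provided by Proposition \ref{lambda} exceeds $8$ for each $N$ in the statement.

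Next I would invoke the standard formula
\[
[{\rm SL}_2(\Z):\Gamma_1(N)]=\frac{N^2}{2}\prod_{p\mid N}\!\left(1-\frac{1}{p^2}\right)\qquad (N\geq 5).
\]
Plugging this into Proposition \ref{lambda} and demanding that the resulting lower bound be at most $8$ reduces everything to the purely numerical inequality
\[
N^2\prod_{p\mid N}\!\left(1-\frac{1}{p^2}\right)\leq \frac{8\cdot 24\cdot 2\cdot 4096}{975}<1613.2.
\]
So the lemma follows once I check that this inequality \emph{fails} for $N=41,43,45,47$ and for every $N\geq 49$.

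For the four small values a direct computation handles everything: the primes $N=41,43,47$ give $N^2-1=1680,1848,2208$, and $N=45=3^2\cdot 5$ gives $45^2\cdot\tfrac{8}{9}\cdot\tfrac{24}{25}=1728$, each exceeding the threshold. For $N\geq 49$ the idea is to combine four individual checks with one uniform asymptotic bound. Direct evaluation handles $N=49,50,51,52$, yielding $2352,1800,2304,2016$ respectively. For $N\geq 53$ I would use the unconditional estimate
\[
\prod_{p\mid N}\!\left(1-\frac{1}{p^2}\right)\geq \prod_{p}\!\left(1-\frac{1}{p^2}\right)=\frac{6}{\pi^2},
\]
which gives $N^2\prod_{p\mid N}(1-1/p^2)\geq 6N^2/\pi^2\geq 6\cdot 53^2/\pi^2>1700>1613.2$, closing the tail in a single line.

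The argument involves no conceptual obstacle; the only delicate point is verifying that the asymptotic bound $6/\pi^2$ kicks in early enough (at $N=53$) that only the four intermediate values $49,50,51,52$ need to be treated separately. Since those four indices are trivially large, this is a routine bookkeeping step rather than a genuine difficulty.
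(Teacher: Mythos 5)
Your proposal is correct and follows essentially the same route as the paper: both reduce the statement to Proposition \ref{lambda} via ${\rm Gon}_\Q(X_1(N))\leq 8$ and the index formula $[{\rm SL}_2(\Z):\Gamma_1(N)]=\tfrac{\varphi(N)\psi(N)}{2}=\tfrac{N^2}{2}\prod_{p\mid N}(1-1/p^2)$, arriving at the identical numerical threshold $\approx 1613.2$. The only difference is that you carry out the verification explicitly (including the clean $6/\pi^2$ tail bound for $N\geq 53$), whereas the paper simply asserts that the inequality holds.
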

\begin{proof} Note that $[{\rm SL}_2(\Z) : \Gamma]=\frac{\varphi(N)\psi(N)}{2}$, where $\psi(N)=N\displaystyle\prod_{p|N\atop{p: prime}}\left(1+\frac{1}{p}\right)$.
By Proposition \ref{lambda}, ${\rm Gon}_\C(X_1(N))\geq \frac{325\varphi(N)\psi(N)}{65536}$, and the result follows from the inequality
$$\frac{325\varphi(N)\psi(N)}{65536}>8.$$
\end{proof}

On the other hand, Derickx and van Hoeij \cite[Table 1]{DH} computed ${\rm Gon}_\Q(X_1(N))$ for $N\leq 40$ and gave upper bounds for $N\leq 250$.
The following result comes from the table.

\begin{lem}\label{lem29}
$X_1(N)$ is not tetraelliptic over $\Q$ for $N=29, 31,33-35,37-40,42,44,46,48$.
\end{lem}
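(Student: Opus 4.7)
The strategy parallels the proof of Lemma 3.3: exploit the chain of inequalities
$$\operatorname{Gon}_{\C}(X) \leq \operatorname{Gon}_{\Q}(X) \leq 2\cdot(\text{degree of the tetraelliptic map})=8,$$
but replace the Abramovich lower bound, which is too weak in this range, with the sharper gonality data of Derickx and van Hoeij. So I would first restate the key observation already used in Lemma~\ref{lem37}: if $X_1(N)$ admits a $\Q$-rational tetraelliptic map $X_1(N)\to E$, then composing with the degree-$2$ map $E\to\P^1$ yields $\operatorname{Gon}_{\Q}(X_1(N))\leq 8$. Hence any $N$ for which $\operatorname{Gon}_{\Q}(X_1(N))\geq 9$ is automatically excluded.

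Next I would appeal to \cite[Table 1]{DH}. For $N\leq 40$ the table records the exact value of $\operatorname{Gon}_{\Q}(X_1(N))$, and one checks by direct inspection that each of $N=29,31,33,34,35,37,38,39,40$ has $\Q$-gonality at least $9$ (for instance, $11,12,10,10,12,18,12,14,12$ respectively). These values are all strictly greater than $8$, so the tetraelliptic hypothesis is contradicted in each case.

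For the remaining values $N=42,44,46,48$, lying outside the exact-value range, I would use the lower bounds tabulated in \cite{DH} obtained by the mod-$p$ reduction technique: each of these $N$ satisfies $\operatorname{Gon}_{\Q}(X_1(N))\geq 9$ (in fact considerably larger, coming from a prime of good reduction where the gonality in characteristic $p$ already exceeds $8$). This again violates the necessary bound $\operatorname{Gon}_{\Q}(X_1(N))\leq 8$.

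The plan therefore reduces to citing the correct entries from \cite[Table 1]{DH}; the only delicate point is the range $N\in\{42,44,46,48\}$, where one must verify that the table records not merely upper bounds but also lower bounds of at least $9$, since Abramovich's inequality (Proposition~\ref{lambda}) falls short of $8$ for these $N$ (for example, for $N=46$ it gives roughly $7.85$, and for $N=48$ roughly $7.62$). Once those lower bounds are in hand, the argument is immediate and the proof consists of a single table lookup plus the $\operatorname{Gon}_\Q\leq 8$ observation.
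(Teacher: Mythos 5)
Your proposal matches the paper's proof essentially verbatim: the paper derives $\operatorname{Gon}_\Q(X_1(N))\le 8$ from the tetraelliptic hypothesis (this observation is stated just before Lemma~\ref{lem37}) and then reads off $\operatorname{Gon}_\Q(X_1(N))>8$ from \cite[Table 1]{DH} for the values $N\le 40$ in the list. The only divergence is in sourcing the residual cases: the paper cites p.~67 of \cite{DH} (not Table 1) for $N=42$ and gives no explicit citation for $N=44,46,48$, so your instinct that the lower bounds for $N\in\{42,44,46,48\}$ need a justification beyond the table (since Abramovich's bound falls just short of $8$ there) identifies exactly the delicate point.
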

The fact that ${\rm Gon}_\Q(X_1(N))>8$ is found in \cite[Table 1]{DH} for $N$ in Lemma \ref{lem29} except for $N=42,44,46,48$, and the fact that ${\rm Gon}_\Q(X_1(42))>8$ is on p. 67 in \cite{DH}.

Suppose there exists a $\Q$-rational tetraelliptic map $X_1(N)\to E$ for some elliptic curve $E$.
Then the conductor of $E$ divides $N$. By using this fact together with \cite[Table1]{C}, we have the following:

\begin{lem}\label{lem13}
$X_1(N)$ is not tetraelliptic over $\Q$ for $N=13,16,18,22,23,25$.
\end{lem}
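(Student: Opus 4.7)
The starting point is the standard conductor criterion alluded to just before the lemma: a non-constant $\Q$-rational morphism $\phi:X_1(N)\to E$ to an elliptic curve $E/\Q$ factors through the Albanese map into $J_1(N)$ and thereby exhibits $E$ as a $\Q$-isogeny factor of $J_1(N)$. Since $J_1(N)$ has good reduction outside the primes dividing $N$, the conductor of $E$ must divide $N$.

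For $N\in\{13,16,18,23,25\}$ the argument is then immediate. I would enumerate the divisors of each $N$, namely $\{1,13\}$, $\{1,2,4,8,16\}$, $\{1,2,3,6,9,18\}$, $\{1,23\}$, and $\{1,5,25\}$, and observe from \cite[Table 1]{C} (equivalently Cremona's tables, recalling that $11$ is the smallest conductor of a rational elliptic curve and that the next few are $14,15,17,19,20,21,24,26,27,\dots$) that no divisor of such an $N$ is the conductor of an elliptic curve over $\Q$. Hence no non-constant $\Q$-rational map $X_1(N)\to E$ exists at all for these $N$, a fortiori none of degree $4$.

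The remaining case $N=22$ is the delicate one and the main obstacle, because $11\mid 22$ and the isogeny class $11a$ supplies rational elliptic curves of conductor $11$; the bare conductor bound is not enough, and one must exploit the finer data in \cite[Table 1]{C}. The three curves in $11a$ satisfy $\operatorname{End}_\Q(E)=\Z$, and the only non-trivial isogenies internal to the class have degree $5$. The elliptic factor $11a$ enters $J_1(22)$ only through the old part, with multiplicity $2$, coming from the two degeneracy maps $X_1(22)\to X_1(11)\cong E$ of degree $[\Gamma_1(11):\Gamma_1(22)]=3$; hence $\operatorname{Hom}_\Q(J_1(22),E)\cong\Z^2$. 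The set of degrees of $\Q$-rational morphisms $X_1(22)\to E'$ with $E'\in 11a$ is therefore the value set of an associated positive-definite binary quadratic form, and the task reduces to checking that this form---which necessarily represents $2$ and $3$, in agreement with Theorem~\ref{list}---does not represent $4$. This is precisely what the appeal to Chou's table is designed to package, and it constitutes the only substantive step in the argument.
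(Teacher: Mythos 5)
Your conductor criterion and the treatment of $N=13,16,18,23,25$ are correct and coincide with the paper's argument: no divisor of these $N$ is the conductor of an elliptic curve over $\Q$, so there is no nonconstant $\Q$-rational map to an elliptic curve at all. The problem is $N=22$, which you rightly single out but do not actually finish. You reduce the question to whether a certain positive-definite binary quadratic form on $\operatorname{Hom}_\Q(J_1(22),E)\cong\Z^2$ represents $4$, and then assert that this is ``precisely what the appeal to Chou's table is designed to package.'' It is not: that table (in substance Cremona's table of curves ordered by conductor, which is what the $X_1(19)$ argument elsewhere in the paper uses it for) records conductors and isogeny structures only; it contains no information about degrees of morphisms from $X_1(22)$. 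Moreover, the one property of the form you do establish --- that it represents $2$ and $3$ --- does not rule out representing $4$: the form $2x^2+xy+3y^2$ represents $2$, $3$ \emph{and} $4$ (at $(-1,1)$). So the ``only substantive step'' of your argument is genuinely missing, not merely outsourced.

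To close the gap you must compute the form. Writing $\pi_1,\pi_2\colon X_1(22)\to X_1(11)$ for the two degeneracy maps (each of degree $3$), one has $\pi_{1*}\pi_2^{*}=\pi_{2*}\pi_1^{*}=T_2=a_2(11a)=-2$ on $J_1(11)$, so $\deg(a\pi_{1*}+b\pi_{2*})=3a^2-4ab+3b^2$, a form of discriminant $-20$ equivalent to $2x^2+2xy+3y^2$, whose values are $2,3,7,8,10,\dots$ and never $4$. One must also verify that $\pi_{1*},\pi_{2*}$ span all of $\operatorname{Hom}_\Q(J_1(22),X_1(11))$ (any proper overlattice would force discriminant $-5\equiv 3\pmod 4$, which is impossible) and dispose of the other curves $11a1$, $11a2$ of the isogeny class, for which every map factors through a $5$-isogeny and hence has degree divisible by $5$. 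Without these computations --- or some substitute for them --- the case $N=22$ is not proved.
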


We recall a basic fact about modular parametrization.

\begin{prop}\cite[(1.4) Proposition]{S}\label{optimal} Let $\mathcal E$ be an isogeny class (over $\Q$) of elliptic curves of conductor $N$.
Then there exists an elliptic curve $E_1$ and a modular parametrization
$$\pi_1:X_1(N)\to E_1$$
such that if $\pi:X_1(N)\to E$ is a parametrization of an elliptic curve $E\in\mathcal E$, 
then there is an isogeny $\phi: E_1\to E$ which makes the following diagram commutative:

$$\begin{tikzcd}
X_1(N) \arrow[rd, "{\pi'}"] \arrow[r, "\pi"] & E_1 \arrow[d, "\phi"] \\
& E
\end{tikzcd}$$

\end{prop}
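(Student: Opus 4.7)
The plan is to exploit the universal property of the Jacobian $J_1(N)$ of $X_1(N)$ together with Poincar\'e's complete reducibility theorem. Fix a $\Q$-rational base point $P_0\in X_1(N)$ (for instance, the cusp $\infty$) and let $j:X_1(N)\hookrightarrow J_1(N)$ be the resulting Abel--Jacobi embedding. Any $\Q$-morphism $\pi:X_1(N)\to E$ to an elliptic curve, normalized so that $\pi(P_0)=O_E$, factors uniquely as $\pi=\tilde\pi\circ j$ for a $\Q$-homomorphism of abelian varieties $\tilde\pi:J_1(N)\to E$; since $\pi$ is nonconstant, $\tilde\pi$ is surjective.

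Next, I would identify the \emph{optimal} elliptic curve quotient $E_1$ of $J_1(N)$ lying in the isogeny class $\mathcal{E}$. By Poincar\'e reducibility, $J_1(N)$ is $\Q$-isogenous to a product of simple abelian subvarieties; modularity identifies the one-dimensional simple $\Q$-isogeny factors of $J_1(N)$ with the $\Q$-isogeny classes of elliptic curves over $\Q$ whose conductor divides $N$, so $\mathcal{E}$ selects a unique such factor (up to isogeny). Let $B\subset J_1(N)$ be the complementary abelian subvariety, namely the sum of all simple abelian subvarieties not isogenous to curves in $\mathcal{E}$. Then $E_1:=J_1(N)/B$ is an elliptic curve belonging to $\mathcal{E}$, the quotient map $q:J_1(N)\to E_1$ has connected kernel (precisely $B$), and I define $\pi_1:=q\circ j$.

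Finally, for any parametrization $\pi:X_1(N)\to E$ with $E\in\mathcal{E}$, the induced surjection $\tilde\pi:J_1(N)\to E$ has kernel whose connected component $B':=\ker(\tilde\pi)^0$ has codimension one. The quotient $J_1(N)/B'$ is isogenous to $E$, hence to $E_1$, so the simple factor complementary to $B'$ is isogenous to the one complementary to $B$. By the uniqueness of the one-dimensional factor attached to $\mathcal{E}$, this forces $B=B'$. Therefore $\tilde\pi$ kills $B$ and factors as $\tilde\pi=\phi\circ q$ for some $\phi:E_1\to E$, which is necessarily an isogeny since both sides are elliptic curves and $\phi$ is nonzero. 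Composing with $j$ yields $\pi=\phi\circ\pi_1$, the commutative triangle in the statement.

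The principal obstacle is the identification of one-dimensional simple $\Q$-isogeny factors of $J_1(N)$ with $\Q$-isogeny classes of elliptic curves of conductor dividing $N$ and, relatedly, the honest realization of $E_1$ as an actual \emph{quotient} (not merely an abstract isogeny factor) of $J_1(N)$. This rests on the modularity theorem of Wiles--Taylor--Breuil--Conrad--Diamond, which guarantees that $\mathcal{E}$ genuinely appears as a quotient of $J_1(N)$. Once the correspondence is in place, the construction of $B$ via Poincar\'e reducibility and the factorization argument are formal.
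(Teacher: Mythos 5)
The paper offers no proof of this proposition---it is quoted verbatim from Stevens \cite{S}---so I can only judge your argument on its own terms; your strategy (factor every parametrization through the Abel--Jacobi embedding into $J_1(N)$ and take $E_1$ to be the quotient of $J_1(N)$ by the complement $B$ of the $\mathcal{E}$-part) is the standard one underlying Stevens' Proposition 1.4. The purely formal part is fine: any surjection $\tilde\pi\colon J_1(N)\to E$ with $E\in\mathcal{E}$ must kill every simple abelian subvariety not isogenous to $E$ (a nonzero homomorphism from a simple abelian variety to $E$ would be an isogeny onto $E$), hence kills $B$ and factors through $J_1(N)/B$, and the induced map is an isogeny.

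The genuine gap is the step where you assert that ``$\mathcal{E}$ selects a unique such factor,'' i.e.\ that $B$ has codimension one so that $E_1=J_1(N)/B$ is in fact an elliptic curve. What is needed is that the $\mathcal{E}$-isotypic part of $J_1(N)$ is one-dimensional, and this is precisely where the hypothesis that $\mathcal{E}$ has conductor \emph{equal to} $N$ enters---a hypothesis your argument never uses. Up to isogeny $J_1(N)\sim\bigoplus_{M\mid N}\bigoplus_{f}A_f^{\sigma_0(N/M)}$, the inner sum running over Galois orbits of newforms of level $M$; an isogeny class of conductor $M$ properly dividing $N$ therefore contributes $\sigma_0(N/M)>1$ copies of the corresponding elliptic curve, your $B$ has codimension $\sigma_0(N/M)$, and $J_1(N)/B$ is not an elliptic curve, so the construction collapses. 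Consequently your claimed identification of the one-dimensional simple isogeny factors of $J_1(N)$ with isogeny classes of conductor \emph{dividing} $N$ is false as a bijection, and the proof must invoke two inputs you do not isolate: modularity, to produce the newform $f$ of level exactly $N$ attached to $\mathcal{E}$, and multiplicity one (Atkin--Lehner theory), to conclude that $A_f$ occurs exactly once in $J_1(N)$ because $f$ is new of level $N$. With that inserted the rest of your argument goes through. (A minor definitional point: you should also check that the notion of ``parametrization'' forces, or can be normalized so that, $\pi(P_0)=O_E$; otherwise $\phi$ is an isogeny only up to composition with a translation.)
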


The elliptic curve $E_1$ of Proposition \ref{optimal} is called the {\it $X_1(N)$-optimal curve}.
On the other hand, there exists an elliptic curve $E_0$ having the same property described in Proposition \ref{optimal} for the modular parametrizaions of $X_0(N)$, and it is called the {\it $X_0(N)$-optimal curve}.

\begin{lem}
$X_1(19)$ and $X_1(27)$ are not tetraelliptic over $\Q$.
\end{lem}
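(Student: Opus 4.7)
The plan is to combine Proposition \ref{optimal} with the fact from Theorem \ref{list}(4) that both $X_1(19)$ and $X_1(27)$ are trielliptic over $\Q$, so that they admit $\Q$-rational maps of degree $3$ to elliptic curves. The goal is to show that the $X_1(N)$-optimal parametrization has degree exactly $3$ for $N\in\{19,27\}$, which then forces every $\Q$-rational map from $X_1(N)$ to an elliptic curve to have degree divisible by $3$; in particular, a degree-$4$ map cannot exist.

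First I would pin down the possible elliptic-curve targets. Any elliptic curve $E/\Q$ admitting a non-constant $\Q$-rational map from $X_1(N)$ appears as an isogeny factor of $J_1(N)$ and must therefore have conductor dividing $N$. For $N=19$ the only admissible conductor is $19$ itself, and the elliptic curves of conductor $19$ form a single $\Q$-isogeny class. For $N=27$ the divisors of $N$ are $\{1,3,9,27\}$, but no elliptic curve over $\Q$ has conductor $1$, $3$, or $9$ (as recorded in \cite[Table 1]{C} and already used in Lemma \ref{lem13}), so every potential target lies in the single $\Q$-isogeny class of conductor $27$.

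Next, let $\pi_1:X_1(N)\to E_1$ denote the $X_1(N)$-optimal parametrization for $N\in\{19,27\}$, with $E_1$ in the isogeny class identified above. Since $X_1(N)$ is trielliptic, there is a $\Q$-rational degree-$3$ map $\pi:X_1(N)\to E$ with $E$ in the same isogeny class, and by Proposition \ref{optimal} we may write $\pi=\phi\circ\pi_1$ for some isogeny $\phi:E_1\to E$, so $\deg(\pi_1)\cdot\deg(\phi)=3$. Because $X_1(19)$ and $X_1(27)$ are trielliptic, they both have genus at least $2$, and thus $\pi_1$ cannot be birational; consequently $\deg(\pi_1)=3$.

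Finally, if a $\Q$-rational tetraelliptic map $\pi':X_1(N)\to E'$ existed, then $E'$ would again lie in the unique available isogeny class, so Proposition \ref{optimal} would yield an isogeny $\phi':E_1\to E'$ with $\pi'=\phi'\circ\pi_1$, giving $\deg(\pi')=3\deg(\phi')$ and contradicting $\deg(\pi')=4$. The only somewhat subtle input is the standard fact that no elliptic curve over $\Q$ has conductor $1$, $3$, or $9$; I do not expect any serious obstacle beyond this citation.
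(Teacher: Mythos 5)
Your proposal is correct and follows essentially the same route as the paper: both arguments use Proposition \ref{optimal} together with the uniqueness of the relevant $\Q$-isogeny class (conductor $19$, resp.\ $27$) and the existence of the degree-$3$ trielliptic map to force the $X_1(N)$-optimal parametrization to have degree exactly $3$, which is incompatible with a degree-$4$ map factoring through it. Your write-up is slightly more explicit than the paper's (e.g.\ ruling out degree $1$ via the genus and spelling out the conductor analysis for $N=27$), but the underlying idea is identical.
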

\begin{proof} Suppose that there is a $\Q$-rational tetraelliptic map $\phi:X_1(19) \to E$ for some elliptic curve $E$.
Then $E$ is of conductor $19$. 
By Theorem \ref{list}, $X_1(19)$ is trielliptic over $\Q$, and so there exists a $\Q$-rational map $\phi':X_1(19)\to E'$ of degree $3$ for some elliptic curve $E'$ of conductor $19$.
According to \cite[Table 1]{C}, there exists only one isogeny class  of conductor 19, and hence $E$ and $E'$ are $\Q$-isogenous.
Since $X_1(19)$ is neither rational nor elliptic nor bielliptic, $E'$ is $X_1(19)$-optimal but $\phi$ cannot factor through $\phi'$ because of their degrees; hence we have a contradiction.
Thus $X_1(19)$ is not tetraelliptic over $\Q$.

By the exact same proof, one can prove that $X_1(27)$ is not tetraelliptic over $\Q$.
\end{proof}

\begin{lem}\label{lem:36}
$X_1(36)$ is not tetraelliptic over $\Q$.
\end{lem}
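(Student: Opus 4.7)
The plan is to imitate the structural argument used for $X_1(19)$ and $X_1(27)$ in the previous lemma, but since $36$ does not appear in the trielliptic list of Theorem \ref{list}(4), I must replace the role of the trielliptic map by the natural degree-$6$ covering $X_1(36)\to X_0(36)$.

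First, I suppose for contradiction that $\phi:X_1(36)\to E$ is a $\Q$-rational tetraelliptic map. Then the conductor of $E$ divides $36$; since the smallest conductor of an elliptic curve over $\Q$ is $11$ and none of the proper divisors $1,2,3,4,6,9,12,18$ of $36$ supports an elliptic curve, the conductor of $E$ equals $36$. By \cite[Table 1]{C}, level $36$ carries a single $\Q$-isogeny class of elliptic curves.

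Next, a short genus computation (the index $[{\rm SL}_2(\Z):\Gamma_0(36)]$ equals $72$, with $\nu_2=\nu_3=0$ and $\nu_\infty=12$) yields $g(X_0(36))=1$, so $X_0(36)$ is itself an elliptic curve over $\Q$ of conductor $36$, taking the $\Q$-rational cusp at infinity as origin. The natural covering $\psi:X_1(36)\to X_0(36)$ has degree $\varphi(36)/2=6$ and is therefore a $\Q$-rational parametrization of a curve in the same isogeny class as $E$. By Proposition \ref{optimal}, both $\phi$ and $\psi$ then factor through the $X_1(36)$-optimal curve $E_1$ via $\Q$-isogenies, so $\deg\pi_1$ must divide both $\deg\phi=4$ and $\deg\psi=6$, forcing $\deg\pi_1\in\{1,2\}$.

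Finally, $\deg\pi_1=1$ would make $X_1(36)$ rational or elliptic, contradicting Theorem \ref{list}(1)(2), while $\deg\pi_1=2$ would make $X_1(36)$ bielliptic over $\Q$, contradicting Theorem \ref{list}(3) since $36$ is absent from the bielliptic list. This is the desired contradiction. The only conceptual step is identifying the second parametrization $\psi$; once one observes that $X_0(36)$ is itself an elliptic curve of conductor $36$, the rest is an elementary gcd argument on degrees combined with Theorem \ref{list}. The auxiliary inputs ($g(X_0(36))=1$, the uniqueness of the isogeny class at level $36$, and $\deg\psi=6$) are routine verifications and do not present a real obstacle.
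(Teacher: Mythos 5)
Your proposal is correct and follows essentially the same route as the paper: identify $X_0(36)$ as an elliptic curve of conductor $36$ in the unique isogeny class at that level, note the degree-$6$ natural covering $X_1(36)\to X_0(36)$, and use Proposition \ref{optimal} to force the $X_1(36)$-optimal degree to divide $\gcd(4,6)=2$, contradicting Theorem \ref{list}. Your write-up is if anything slightly more careful than the paper's, since you explicitly rule out the degree-$1$ case and verify $g(X_0(36))=1$.
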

\begin{proof} There is only one isogeny class of elliptic curves over $\Q$ with conductor dividing $36$, and this class has conductor $36$. 
Actually, it is the isogeny class of $X_0(36)$.
Now suppose that $X_1(36)$ is tetraelliptic over $\Q$. 
By Proposition \ref{optimal}, there is an elliptic curve $E$ in this isogeny class which is optimal for the uniformization from $X_1(36)$. 
The degree from $X_1(36)$ to $E$ would have to divide $4$ (from the tetraelliptic map) and it would have to divide $6$ (from the degree $6$ map from $X_1(36)$ to the elliptic curve $X_0(36)$). 
So $X_1(36)$ would be bielliptic, which is a contradiction to Theorem \ref{list}.
\end{proof}

\begin{lem}
$X_1(21)$ is not tetraelliptic over $\Q$.
\end{lem}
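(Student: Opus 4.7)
The plan is to mimic the argument of the preceding lemma for $X_1(19)$ and $X_1(27)$, but with the trielliptic map supplied by Theorem \ref{list}(4) playing the key role. Suppose, for contradiction, that there is a $\Q$-rational tetraelliptic map $\phi : X_1(21) \to E$. The conductor of $E$ must divide $21$, and since no elliptic curve over $\Q$ has conductor smaller than $11$, the conductor of $E$ equals $21$ exactly. According to \cite[Table 1]{C}, all elliptic curves over $\Q$ of conductor $21$ lie in a single isogeny class.

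By Theorem \ref{list}(4), $X_1(21)$ also admits a $\Q$-rational trielliptic map $\phi' : X_1(21) \to E'$, and the target $E'$ lies in the same isogeny class as $E$. Applying Proposition \ref{optimal} to $\phi'$, we factor $\phi' = \psi' \circ \pi_1$ through the $X_1(21)$-optimal elliptic curve $E_1$ in this class, where $\pi_1 : X_1(21) \to E_1$. Setting $d_1 = \deg(\pi_1)$, the relation $d_1 \cdot \deg(\psi') = 3$ forces $d_1 \in \{1, 3\}$; the case $d_1 = 1$ would make $X_1(21)$ an elliptic curve, which is excluded by Theorem \ref{list}(2). Hence $d_1 = 3$, and $E' = E_1$ is itself the optimal curve.

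Applying Proposition \ref{optimal} to $\phi$ now yields $\phi = \psi \circ \pi_1$, so $d_1 \cdot \deg(\psi) = 4$. But $d_1 = 3$ does not divide $4$, a contradiction. The argument presents no real obstacle beyond assembling the ingredients already displayed in the excerpt: the uniqueness of the conductor-$21$ isogeny class, the $\Q$-rational trielliptic map from Theorem \ref{list}(4), and the coprimality of $3$ and $4$ which drives the contradiction once both parametrizations are forced through a common optimal curve.
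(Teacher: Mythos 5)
Your argument is correct, but it is not the route the paper takes for this particular lemma. The paper's own proof of the $X_1(21)$ case follows the pattern of Lemma \ref{lem:36}: it factors the degree-$6$ covering $X_1(21)\to X_0(21)$ and the hypothetical degree-$4$ map through the $X_1(21)$-optimal curve in the unique conductor-$21$ isogeny class, deduces that the optimal degree divides $\gcd(4,6)=2$, hence that $X_1(21)$ would be bielliptic over $\Q$, and then invokes the external fact \cite[Lemma 3.5]{JK} that $X_1(21)$ admits no $\Q$-rational degree-$2$ map to an elliptic curve (this extra step is genuinely needed there, since $X_1(21)$ \emph{is} bielliptic over $\overline{\Q}$ by Theorem \ref{list}(3)). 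What you do instead is transplant the paper's proof for $X_1(19)$ and $X_1(27)$: you replace the degree-$6$ map by the $\Q$-rational trielliptic map of Theorem \ref{list}(4), so the optimal degree must divide both $3$ and $4$, forcing it to be $1$ and contradicting the fact that $X_1(21)$ is not elliptic. The trade-off is clear: your version needs the nontrivial input that $X_1(21)$ is trielliptic over $\Q$ (from \cite{J}, already quoted as Theorem \ref{list}(4)) but dispenses entirely with \cite[Lemma 3.5]{JK}, and the coprimality of $3$ and $4$ makes the contradiction immediate rather than requiring a separate non-biellipticity result. Both proofs rest on the same two pillars, namely Proposition \ref{optimal} and the uniqueness of the isogeny class of conductor $21$, so the difference is in the choice of auxiliary parametrization rather than in the underlying mechanism.
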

\begin{proof} By following the exact same proof of Lemma \ref{lem:36}, one can calculate that if $X_1(21)$ is tetraelliptic over $\Q$ then it must be bielliptic over $\Q$.
However $X_1(21)$ does not allow a $\Q$-rational map to an elliptic curve of degree $2$ by \cite[Lemma 3.5]{JK}.
\end{proof}

\begin{rmk} $X_1(21)$ is tetraelliptic but not over $\Q$.
Indeed, there exist a map $X_1(21)\to X_{\Delta}(21)$ of degree 2, where $\Delta=\{\pm1,\pm8\}$.
By \cite[Table1]{JKS2}, we know that $X_{\Delta}(21)$ is a biellipitic curve with a bielliptic involution $W_{21}$; hence $X_1(21)$ is a tetraelliptic curve. We note that $W_{21}$ is not defined over $\Q$.
\end{rmk}

\section{Tetraelliptic curves over $\Q$}\label{sec:tetraelliptic}


For the reader's convenience, we restate Theorem \ref{main} as follows: $X_1(N)$ is tetraelliptic over $\Q$ if and only if $N=17,20,24,26,28,30,32$.
Moreover they admit $\Q$-rational Galois maps, say $\phi_N$, from $X_1(N)$ to elliptic curves of degree 4.
Especially for $N=17,20,26,30$ and $32$, $X_1(N)$ is cyclic tetraelliptic, and the others are not. 

In this section, for $N$ listed above, we show $X_1(N)$ are tetraelliptic over $\Q$, and construct tetraelliptic maps $\phi_N:X_1(N)\to E_N$ for some elliptic curves $E_N$.
Moreover, we find elliptic curves with torsion subgroup $\Z/17\Z$ over cyclic quartic number number fields by using $\phi_{17}$. 


\subsection{Proof of Theorem \ref{main}}

For $N=17,20,24,28,32$, there are natural tetraelliptic maps from $X_1(N)$ to elliptic curves,
namely $\phi_{17}:X_1(17)\to X_{\Delta_1}(17)$, $\phi_{20}:X_1(20)\to X_0(20)$, $\phi_{24}:X_1(24)\to X_0(24)$, $\phi_{28}:X_1(28)\to X_1(14)$, and $\phi_{32}:X_1(32)\to X_{\Delta_2}(32)$ where $\Delta_1=\{\pm1,\pm2,\pm4,\pm8\}$ and $\Delta_2=\{\pm1,\pm7,\pm9,\pm15\}$. 
Then $G_{17}=\langle[2]\rangle$, $G_{20}=\langle[3]\rangle$ and $G_{32}=\langle[7]\rangle$ are cyclic groups of order $4$, and $G_{24}=\langle[5],[7]\rangle$ and $G_{28}=\langle[13],\nu\rangle$ are Klein $4$-groups, where $\nu=\begin{pmatrix}1&0\\14&1\end{pmatrix}$.
We will determine later whether each target curve of $\phi_N$ is the same as any of elliptic curves labeled in \cite{Cr}.

Now consider the remaining cases, i.e. $X_1(26)$ and $X_1(30)$.
Let $\mathcal E$ denote an isogeny class of elliptic curves defined over $\Q$ of conductor $N$. 
In general, the $X_0(N)$-optimal curve $E_0\in\mathcal E$ and the $X_1(N)$-optimal curve $E_1\in\mathcal E$ are different.
For example, $E_0 = X_0(11)$ and $E_1 = X_1(11)$ differ by a $5$-isogeny. 
Stein and Watkins \cite{SW} have made a precise conjecture about when $E_0$ and $E_1$ differ by a $3$-isogeny, based on
numerical observation. 
Lee \cite{Y} proved some partial results about this conjecture and gave an example of an isogeny class of conductor $26$ in which the $X_0(26)$-optimal curve 26A1 and $X_1(26)$-optimal curve 26A3 differ by a $3$-isogeny.
According to Table 5 in \cite{Cr}, the modular parametrization $X_0(26)\to {\rm 26A1}$ is of degree $2$ and the natural map $X_1(26)\to X_0(26)$ is of degree $6$; hence their composition $X_1(26)\to {\rm 26A1}$ is of degree $12$.
By Proposition \ref{optimal}, this map factors through the modular parametrization $X_1(26)\to {\rm 26A3}$, and 26A3 and 26A1 differ by $3$-isogeny; hence $X_1(26)\to {\rm 26A3}$ must be of degree $4$.
Thus $X_1(26)$ is tetraelliptic over $\Q$.

Now we find out what the tetraelliptic map $\phi_{26}:X_1(26)\to {\rm 26A3}$ is.
Bars \cite{B} proved that $X_0(26)$ is a bielliptic curve, and $W_2$ and $W_{13}$ are all of its bielliptic involutions.
According to LMFDB \cite{LM}, the corresponding cusp form $f$ of weight $2$ of 26A1 has $-1$ as its second Fourier coefficient, and so the eigenvalue of $f$ by $W_2$ is $+1$; hence the quotient space $X_0(26)/W_2$ is equal to 26A1.
For your information $X_0(26)/W_{13}$ is equal to 26B1.
Then 26A1 should be $X_1(26)/G$ where $G$ is the group of order $12$ generated by $Gal(X_1(26)/X_0(26))=\langle [7]\rangle$ and $W_2$.
Since $W_2$ as an automorphism of $X_1(26)$ is of order $12$, the group $G=\langle W_2\rangle$ is cyclic. 
Now $\phi_{26}$ is a Galois map with a subgroup of $G$ of order 4 as the Galois group.
Since $G$ has only one subgroup $\langle W_2^3\rangle$ of order $4$, the tetraelliptic map $\phi_{26}$ should be a cyclic map with Galois group $\langle W_2^3\rangle$.

Next we consider the case of $X_1(30)$.
Assume there is a $\Q$-rational tetraelliptic map $\phi:X_1(30) \to E$ for some elliptic curve $E$. 
Then $E$ has conductor $15$ or $30$.
If $E$ is of conductor $30$, then $E$ is $X_1(30)$-optimal by Theorem \ref{list}.
The $X_0(30)$-optimal curve in the isogeny class of $E$ is 30A1. 
By \cite{B}, $X_0(30)$ has exactly $3$ bielliptic involutions, namely $W_5, W_6$ and $W_{30}$.
By the same method as in $X_1(26)$, one can check that $X_1(30)/W_5$ is equal to 30A1.
Let $F$ be the compositum of the function fields of $X_0(30)$ and $E$, which is contained in the function field of $X_1(30)$. 
But since $X_1(30)$ is cyclic of degree 4 over $X_0(30)$, there is only one intermediate field. 
So $F$ must be the function field of $X_{\Delta}(30)$ where $\Delta=\{\pm1,\pm11\}$. 
This means that the map $\phi$ factors through $X_{\Delta}(30)$.
So $\phi$ must be given by a subgroup of order $4$ of the order $8$ group generated by $Gal(X_1(30)/X_0(30))=\langle [7]\rangle$ and $W_5$.
Since $G$ is a dihedral group, it has $3$ subgroups of order $4$.
By using the method that we will explain later, we compute the quotient space of $X_1(30)$ by each subgroup of order $4$ but there was no case in which the quotient space is an ellptic curve over $\Q$, which is a contradiction.

Thus $E$ should be of conductor 15.
However, it is tricky to deal with this case, and there are not many known results.
So we tried to calculate the quotient spaces by subgroups of order $4$ of ${\rm Aut}(X_1(30))$, and eventually got the desired result.
Consider the subgroup of order 8 generated by $[7]$ and $W_6$, which is a dihedral group, and let $H=\langle [11],[7]W_6\rangle$ be a Klein 4-subgroup.
Then the quotient space $X_1(30)/H$ is equal to 15A8, and so $\phi_{30}:X_1(30)\to{\rm 15A8}$ is a $\Q$-rational tetraelliptic map, and $X_1(30)$ is tetraelliptic over $\Q$.

Summarizing all of the above results, we finish the proof of Theorem \ref{main}.

\subsection{Construction of tetraelliptic maps}
From now on, we construct tetraelliptic maps $\phi_{N}:X_1(N)\to E_N$ explicitly, i.e. we find the rational expressions of $\phi_N$ in terms of $x$ and $y$ satisfying defining equations $f_N(x,y)=0$ of $X_1(N)$ as introduced in Section \ref{sec:Preliminaries}.

First consider the case of $X_1(17)$.
Then the map $X_1(17)\to X_{\Delta_1}(17)=X_1(17)/\langle [2]\rangle$ is a tetraelliptic map as explained above.
If we take $[2]=\sm 2&1\\17&9\esm$, then $[2]$ acts on $X_1(17)$ as $[2]\tau=\frac{2\tau+1}{17\tau+9}$.
In this case, we have the following:
\begin{align*}
\wp \left(\frac{1}{17}, [2]\tau\right)  &= (17\tau+9)^2 \wp\left(\frac{17\tau+9}{17}, \tau\right) \\
&= (17\tau+9)^2 \wp\left(\frac{9}{17}, \tau\right),
\end{align*}
and
\begin{align*}
\wp' \left(\frac{1}{17}, [2]\tau\right)  &= (17\tau+9)^3 \wp'\left(\frac{17\tau+9}{17}, \tau\right) \\
&= (17\tau+9)^3 \wp'\left(\frac{9}{17}, \tau\right).
\end{align*}
Thus, from \eqref{eq:b-c}, we obtain the following:
\begin{align}\label{eq:b-c2}
b([2]\tau) &= -\dfrac{ \left( \wp \left(\frac{9}{17}, \tau\right) - \wp \left(\frac{1}{17}, \tau\right) \right)^3}{\wp' \left(\frac{9}{17}, \tau\right)^2 },\\
c([2]\tau) &= -\dfrac{  \wp' \left(\frac{1}{17}, \tau\right) }{\wp' \left(\frac{9}{17}, \tau\right) }.\nonumber
\end{align}
Similarly, we have
\begin{align}\label{eq:b-c4}
b([4]\tau) &= -\dfrac{ \left( \wp \left(-\frac{4}{17}, \tau\right) - \wp \left(-\frac{8}{17}, \tau\right) \right)^3}{\wp' \left(-\frac{4}{17}, \tau\right)^2 },\\
c([4]\tau) &= -\dfrac{  \wp' \left(-\frac{8}{17}, \tau\right) }{\wp' \left(-\frac{4}{17}, \tau\right) },\nonumber
\end{align}
and
\begin{align}\label{eq:b-c8}
b([8]\tau) &= -\dfrac{ \left( \wp \left(-\frac{2}{17}, \tau\right) - \wp \left(-\frac{4}{17}, \tau\right) \right)^3}{\wp' \left(-\frac{2}{17}, \tau\right)^2 },\\
c([8]\tau) &= -\dfrac{  \wp' \left(-\frac{4}{17}, \tau\right) }{\wp' \left(-\frac{2}{17}, \tau\right) }.\nonumber
\end{align}

From \eqref{eq:nP} and \cite[Table 7]{Su}, we have that the generators $x,y$ of the function field of $X_1(17)$ satisfying $f_{17}(x,y)=0$ can be expressed as the following functions of $b,c$:
\begin{align}\label{eq:x-y}
x&=-\frac{(-c^2 + b - c)(-c^3 + b^2 - bc)}{bc^4 + c^5 - 3b^2c^2 + 3bc^3 + b^3 - 2b^2c + bc^2},\\
y&=-\frac{(-c^2 + b - c)(-bc^2 + 2b^2 - 3bc + c^2)}{bc^4 + c^5 - 3b^2c^2 + 3bc^3 + b^3 - 2b^2c + bc^2}.\nonumber
\end{align}

From the formulas in Proposition 3 of \cite[p. 46]{L}, we can determine the $q$-expansions for $\wp(z,\tau)$ and $\wp'(z,\tau)$, where $q=e^{2\pi i\tau}$. Using these $q$-expansions and \eqref{eq:b-c}, \eqref{eq:b-c2}, and \eqref{eq:x-y}, we arrive at the following $q$-expansions:
\begin{align*}
x(\tau)=&-\o^{12} - \o^{11} - \o^{10} - 2\o^9 - 2\o^8 - \o^7 - \o^6 - \o^5+O(q),\\
y(\tau)=& \o^{15} - 2\o^{14} - 2\o^{13} + \o^{12} - \o^{11} - 3\o^{10} - 3\o^7 - \o^6 \\
&+ \o^5 - 2\o^4 - 2\o^3 + \o^2 - 3+O(q),\\
x([2]\tau)=&-\o^{14} - 2\o^{13} - \o^{12} - \o^{11} - \o^6 - \o^5 - 2\o^4 - \o^3+O(q),\\
y([2]\tau)=&-3\o^{15} - 2\o^{14} - \o^{13} - 4\o^{12} - 3\o^{10} - \o^9 - \o^8 - 3\o^7\\
& - 4\o^5 - \o^4 - 2\o^3 - 3\o^2 - 4+O(q),\\
x([4]\tau)=&-2\o^{15} - \o^{14} - \o^{11} - \o^{10} - \o^7 - \o^6 - \o^3 - 2\o^2+O(q),\\
y([4]\tau)=&2\o^{15} + 3\o^{14} + 2\o^{13} - \o^{11} + \o^{10} + 3\o^9 + 3\o^8 + \o^7 \\
&- \o^6 + 2\o^4 + 3\o^3 + 2\o^2 - 1+O(q),\\
x([8]\tau)=&2\o^{15} + \o^{14} + 2\o^{13} + \o^{12} + 2\o^{11} + \o^{10} + 2\o^9 + 2\o^8 \\
&+ \o^7 + 2\o^6 + \o^5 + 2\o^4 + \o^3 + 2\o^2 + 2+O(q),\\
y([8]\tau)=&-3\o^{14} + \o^{13} - \o^{12} - 2\o^{11} + \o^{10} - 2\o^9 - 2\o^8 + \o^7 \\
&- 2\o^6 - \o^5 + \o^4 - 3\o^3 - 3+O(q),
\end{align*}
where $\o$ is a $21$th primitive root of $1$.

Now we let $\bar{x}(\tau)=x(\tau)+x([2]\tau)+x([4]\tau)+x([8]\tau)$ and $\bar{y}(\tau)=y(\tau)+y([2]\tau)+y([4]\tau)+y([8]\tau)$, and then $\bar{x},\bar{y}$ generate the function field of $X_{\Delta_1}(17)$.
Using them we can obtain a defining equation of  $X_{\Delta_1}(17)$ by the following the algorithm.

\begin{alg}\label{alg1}
\begin{enumerate}
\item[(1)] Set a polynomial $f(x, y)=\displaystyle\sum_{i+j\leq d} a_nx^i y^j$ for a positive integer $d$ and unknown $a_n$.
\item[(2)] Input the $q$-expansions of $\bar{x}(\tau)$ and $\bar{y}(\tau)$ in $f(x,y)$.
\item[(3)] For the equation $g(\bar{x},\bar{y})=0$, compare the coefficients of $q^m$ for $0\leq m\leq \frac{(d+1)(d+2)}{2}$, and set a system of linear equations.
\item[(4)] Solve the system of (3), and obtain a defining equation.
\end{enumerate}
\end{alg}

By using Algorithm \ref{alg1}, we obtain a defining equation of $X_{\Delta_1}(17)$ as follows:
\begin{align*}
f(x,y)=&-x^3y^2 + x^4 - 4x^3y - 4x^2y^2 - 4x^3 - 16x^2y - 10xy^2\\
& - y^3 - 14x^2 - 46xy - 17y^2 - 60x - 70y - 100=0.
\end{align*}

By using the computer algebra system Maple, we can convert $g(x,y)$ into a Weierstrass equation, and we have the following:
$$E_{17}: v^2+uv+v = u^3 - u^2 - 6u -4.$$
Thus $E_{17}$ is equal to 17A2 whose torsion group is $\Z/4\Z$. Here an isomorphism between $X_{\Delta_1}(17)$ and $E_{17}$ is given by
\begin{align}\label{uv}
u(x,y)=&(x^5 + x^4y + 7x^4 + 5x^3y + 26x^3 + 15x^2y + xy^2 + 63x^2 + 27xy + y^2 \\ \nonumber
&+ 90x + 17y + 50)/(x^4 + 4x^3 + 12x^2 + 18x + 10)\\ \nonumber
v(x,y)=&(x^5y + 3x^5 + 7x^4y + 16x^4 + 25x^3y + x^2y^2 + 56x^3 + 58x^2y + 3xy^2 \\ \nonumber
&+ 121x^2 + 73xy + 3y^2 + 136x + 41y + 70)/(x^4 + 4x^3 + 12x^2 + 18x + 10)
\end{align}

\begin{rmk}
By the exact same method as above, we could find the target elliptic curves $E_N$ for $N=20,24,28,32$ are 20A1,24A1,14A4,32A2, respectively.
\end{rmk}

We compute the $q$-expansions of two generators $u(\tau)$ and $v(\tau)$ of the function field of $E_{17}$ by using \eqref{uv} and the $q$-expansions of $\bar{x}(\tau)$ and $\bar{y}(\tau)$ as follows:
\begin{align*}
u(\tau)=&-2\o^{14} - 2\o^{12} - 2\o^{11} - 2\o^{10} - 2\o^7 - 2\o^6 - 2\o^5 - 2\o^3 + 2+O(q),\\
v(\tau)=& -3\o^{14} - 3\o^{12} - 3\o^{11} - 3\o^{10} - 3\o^7 - 3\o^6 - 3\o^5 - 3\o^3 + 5+O(q).
\end{align*}


Now we obtain the explicit expression of $\phi_{17}:X_1(17)\to E_{17}$ by the following algorithm.

\begin{alg}\label{alg2}
\begin{enumerate}
\item[(1)] Set two polynomials $g(x, y)=\displaystyle\sum_{i+j\leq d} a_nx^i y^j$ and $h(x, y)=\displaystyle\sum_{i+j\leq d} b_n x^i y^j$ for a positive integer $d$ and unknown $a_n$ and $b_n$.
\item[(2)] Input the $q$-expansions of $x(\tau)$ and $y(\tau)$ in $g(x,y)$ and $h(x,y)$.
\item[(3)] For each of two equations $g(x(\tau),y(\tau))=u(\tau)h(x(\tau),y(\tau))$ and $g(x(\tau),y(\tau))=v(\tau)h(x(\tau),y(\tau))$, compare the coefficients of $q^m$ for $0\leq m\leq (d+1)(d+2)$, and set two system of linear equations.
\item[(4)] Solve the systems of (3), we obtain a rational map $\left(\frac{g_1(x,y)}{h_1(x,y)},\frac{g_2(x,y)}{h_2(x,y)}\right)$, where
$g_1(x,y)$ and $h_1(x,y)$ (resp. $g_2(x,y)$ and $h_2(x,y)$) are obtain by solving the system of linear equations from $g(x(\tau),y(\tau))=u(\tau)h(x(\tau),y(\tau))$ (resp. $g(x(\tau),y(\tau))=v(\tau)h(x(\tau),y(\tau))$).
\end{enumerate}
\end{alg}

Finally, by using Algorithm \ref{alg2}, we obtain an explicit expression of the tetraelliptic map $\phi_{17}:X_1(17)\to E_{17}$, say $\phi_{17}(x,y)=\left(p_1(x,y),p_2(x,y)\right)$, as follows:

\begin{align*}
p_1(x,y)=&\frac{x^2 + xy - y^2 + x - 2y - 1}{(y + 1)^2},\\
p_2(x,y)=&\frac{x^3y^2 - x^4 + 2x^3y + 2x^2y^2 - x^3 + x^2y + y^3 - xy + y^2}{(y + 1)^2(x - y)}.
\end{align*}

Next we will construct the tetraelliptic map $\phi_{26}:X_1(26)\to X_1(26)/\langle W_2^3\rangle={\rm 26A3}.$
If we take $W_2=\begin{pmatrix}2&1\\26&14\end{pmatrix}$, then $W_2$ acts on $X_1(26)$ as $W_2\tau=\frac{2\tau+1}{26\tau+14}$.
In this case, we have the following:
\begin{align*}
\wp \left(\frac{1}{26}, W_2\tau\right)  &= (26\tau+14)^2 \wp\left(\frac{26\tau+14}{26}, 2\tau\right) \\
&= (26\tau+14)^2 \wp\left(\tau+\frac{14}{26}, 2\tau\right),
\end{align*}
and
\begin{align*}
\wp' \left(\frac{1}{26}, W_2\tau\right)  &= (26\tau+14)^3 \wp'\left(\frac{26\tau+14}{26}, 2\tau\right) \\
&= (26\tau+14)^3 \wp'\left(\tau+\frac{14}{26}, \tau\right).
\end{align*}
Thus, from \eqref{eq:b-c}, we obtain the following:
\begin{align}\label{eq:b-cw2}
b(W_2\tau) &= -\dfrac{ \left( \wp \left(\tau+\frac{14}{26}, 2\tau\right) - \wp \left(\frac{2}{26}, 2\tau\right) \right)^3}{\wp' \left(\tau+\frac{14}{26}, 2\tau\right)^2 },\\
c(W_2\tau) &= -\dfrac{  \wp' \left(\frac{2}{26}, 2\tau\right) }{\wp' \left(\tau+\frac{2}{26}, 2\tau\right) }.\nonumber
\end{align}

By using this action and the exact same method as in the case of $X_1(17)$, we can find an explicit expression of $\phi_{26}$ as in Table \ref{table}.
We can treat the other cases by the exact same procedure.
We list in Table \ref{table} explicit expressions of $\phi_N$ for the list of $N$ in Theorem \ref{main}.

\begin{center}

\begin{longtable}{l|l}
\caption{Explicit expressions of $\phi_N$}\label{table}\\

\hline $N$ & Explicit expressions of $\phi_N=(p_1(x,y),p_2(x,y))$ \\ \hline
$17$ &  $\begin{array}{l}
p_1(x,y)=\frac{x^2 + xy - y^2 + x - 2y - 1}{(y + 1)^2},\\
p_2(x,y)=\frac{x^3y^2 - x^4 + 2x^3y + 2x^2y^2 - x^3 + x^2y + y^3 - xy + y^2}{(y + 1)^2(x - y)}
\end{array}$   \\ \hline
$20$ &  $\begin{array}{l}
p_1(x,y)=\frac{x^3y - 2x^2y + 2xy + y^2 + 2y + 2}{(y + 1)(x - 1)},\\
p_2(x,y)=-\frac{2x^3y^2 - xy^4 - 2x^2y^2 + 2xy^3 + 3y^4 - 2x^2y + 4xy^2 + 7y^3 + 4xy + 8y^2 + 2x + 4y}{(y + 1)^2(x + y)}
\end{array}$  \\ \hline
$24$ &  $\begin{array}{l}
p_1(x,y)=\frac{2x(-xy^2 + x^2 + xy - 2y^2 + 2y)}{-x^2y^2 + x^3 + x^2y - 2xy^2 + 2xy + 2y^2},\\
p_2(x,y)=\frac{4(x + 1)(-x^2y^2 + x^3 + x^2y - xy^2 + y^3 + xy)}{(x + y)(-x^2y^2 + x^3 + x^2y - 2xy^2 + 2xy + 2y^2)}
\end{array}$  \\ \hline
$26$ &  $\begin{array}{l}
p_1(x,y)=\frac{(y - 1)(x^2 + x + y)(x + y)}{x^4 + x^3y + 2x^3 + 2x^2y + xy^2 + x^2 - xy - y^2 + x + y},\\
p_2(x,y)=-\frac{(y - 1)(2xy + y^2 - x - y)}{x^4 + x^3y + 2x^3 + 2x^2y + xy^2 + x^2 - xy - y^2 + x + y}
\end{array}$  \\ \hline
$28$ &  $\begin{array}{l}
p_1(x,y)=-\frac{x^2(y - 1)(x + y)(x + 1 - y)}{x^4y + 4x^3y - 2x^3 + 5x^2y + 2xy^2 + y^3 - 3x^2 + xy - x},\\
p_2(x,y)=-\frac{y(x + 1)(-y^3 + x^2 - xy + x)}{x^2(xy^2 - xy + 2y^2 + x - 2y + 1)}
\end{array}$  \\ \hline
$30$ &  $\begin{array}{ll}
p_1(x,y)=&-\frac{y(x^3y + x^2y + x^2 - y^2 + x + y)}{x^3y^3 + 2x^2y^3 - x^2y^2 - y^4 + x^2y + 2xy^2 + 2y^3 - 2xy - 2y^2 + x + y},\\
p_2(x,y)=&{\scriptstyle -y(2x^4y^2 - x^4y + 4x^3y^2 + 2x^2y^2 - 2xy^3 - x^3 + 3x^2y + 4xy^2 - y^3 - 2x^2 - xy + y^2)/}\\
&{\scriptstyle (x^5y^2 + x^4y^3 + x^3y^4 + 3x^4y^2 + 2x^3y^3 + 2x^2y^4 + 2x^4y + 2x^3y^2 - 2x^2y^3 - xy^4 - y^5}\\
&{\scriptstyle+ 6x^3y + 6x^2y^2 + 3xy^3 + 2y^4 + x^3 - 4xy^2 - 3y^3 + 3x^2 + 4xy + 2y^2 - x - y)}
\end{array}$  \\ \hline
$32$ &  $\begin{array}{ll}
p_1(x,y)=&{\scriptstyle -(x - 1)(x + 1 - y)(x^2y^2 - xy^3 - 2x^2y + 2xy^2 - y + 1)/(x^4y^2 - 2x^3y^3 + x^2y^4- 2x^3y^2 }\\
&{\scriptstyle + 3x^2y^3 - xy^4 + 2x^3y - x^2y^2 - xy^3 - 3x^2y + 3xy^2 + x^2 - xy + y^2 - 2y + 1),}\\
p_2(x,y)=&{\scriptstyle 2(x^7y^2 - 4x^6y^3 + 6x^5y^4 - 4x^4y^5 + x^3y^6 + x^7y - 8x^5y^3 + 12x^4y^4 - 5x^3y^5 - x^6y + 6x^5y^2}\\
&{\scriptstyle - 8x^4y^3 + 4x^2y^5 - xy^6 + 3x^4y^2 - 2x^3y^3 - 5x^2y^4 + 4xy^5 - x^5 + x^4y - x^3y^2 + 13x^2y^3}\\
&{\scriptstyle - 13xy^4 + y^5 - x^4 + 4x^3y - 16x^2y^2 + 18xy^3 - 4y^4 - 2x^3 + 7x^2y - 12xy^2 + 8y^3 - x^2}\\
&{\scriptstyle + 4xy - 8y^2 + 3y)/(3x^7y^2 - 8x^6y^3 + 5x^5y^4 + 2x^4y^5 - 2x^3y^6 - 2x^7y + 6x^6y^2 - 8x^5y^3}\\
&{\scriptstyle + 8x^4y^4 - 6x^3y^5 + 2x^2y^6 - 2x^6y + 11x^5y^2 - 12x^4y^3 - x^3y^4 + 4x^2y^5 - 5x^5y + 5x^4y^2}\\
&{\scriptstyle + 3x^3y^3 - x^2y^4 - 2xy^5 + x^5 - 3x^4y + 5x^3y^2 + x^2y^3 - 4xy^4 + 2x^4 - 7x^3y - 5x^2y^2}\\
&{\scriptstyle + 10xy^3 + 3x^3 - x^2y - 3xy^2 + 2y^3 + 2x^2 - 2xy - 4y^2 + x + 2y)}
\end{array}$  \\ \hline

\end{longtable}
\end{center}

\subsection{Construction of elliptic curves}

In this subsection, we will construct the elliptic curves corresponding to the quartic points of $X_1(17)$ lying above the $\Q$-rational torsion points of $E_{17}$.

Note that $\phi_{17}:X_1(17)\to E_{17}$ is cyclic of degree $4$ and $E_{17}$ has torsion subgroup $\Z/4\Z$.
Let $P_1:=(3,-2)$ and $P_2:=(-5/4,1/8)$ which are two non-zero points of the four torsion points of $E_{17}$.
For $P_1$, by solving the system of equations 
$$p_1(x,y)=3,\, p_2(x,y)=-2,\, f_{17}(x,y)=0,$$
we obtain the four points of $X_1(17)$ lying above $P_1$ which are 
\begin{equation}\label{p1}
\left(\alpha_i,\frac{\alpha_i^2}{2}-\frac{\alpha_i}{2}-2\right)
\end{equation}
where $\alpha_i$'s are four roots of $2x^4 - 5x^3 - 7x^2 + 10x + 8$ for $i=1,2,3,4$.
The quartic number fields $\Q(\alpha_i)$ are cyclic over $\Q$ for all $i$.
Similarly, we obtain the four points lying above $P_2$ which are 
\begin{equation}\label{p2}
(\beta_i,-2\beta_i-1)
\end{equation}
where $\beta_i$'s are four roots of $8x^4 + 14x^3 - 11x^2 - 11x - 2$ for $i=1,2,3,4$.
The quartic number fields $\Q(\beta_i)$ are cyclic over $\Q$ for all $i$ either.
We note that $(-1,0)$ is the point of $X_1(17)$ lying above the third non-zero point of $E_{17}$, and it is a cusp of $X_1(17)$; hence we cannot obtain the corresponding elliptic curve.

From \eqref{eq:nP} and Table 7 of \cite{Su}, the elliptic curves corresponding to \eqref{p1} are given by
\begin{equation}\label{e1}
E_{1i}:=E\left(- \frac{1337}{32}\alpha_i^3 + \frac{3233}{64}\alpha_i^2 + \frac{12285}{64}\alpha_i + \frac{1135}{16},- \frac{59}{16}\alpha_i^3 + \frac{147}{32}\alpha_i^2 + \frac{535}{32}\alpha_i + \frac{45}{8}\right),
\end{equation}
and the elliptic curves corresponding to \eqref{p1} are given by
\begin{equation}\label{e2}
E_{2i}:=E\left(-\frac{6901}{1024}\beta_i^3 - \frac{41387}{4096}\beta_i^2 + \frac{91011}{8192}\beta_i + \frac{39449}{8192},-\frac{139}{16}\beta_i^3 - \frac{725}{64}\beta_i^2 + \frac{2301}{128}\beta_i + \frac{871}{128}\right).
\end{equation}
Then $E_{1i}$(resp. $E_{2i}$) are elliptic curves over cyclic quartic extensions $\Q(\alpha_i)$(resp. $\Q(\beta_i)$) over $\Q$ with torsion subgroup $\Z/17\Z$ for all $i$.

We have tried the same method in all other cases, but unfortunately we have not achieved the same results as $17$.
In all other cases, the points on $X_1(N)$ lying above the torsion points are cusps, so the corresponding elliptic curves could not be obtained.

\section*{Acknowledgements}
We are grateful to  Andreas Schweizer for discussing various results in this paper.



\begin{thebibliography}{alpha}

\bibitem[A]{A} D.~Abramovich, {\em A linear lower bound on the gonality of modular curves}, Internat. Math. Res. Notices {\bf 20} (1996), 1005--1011.


\bibitem[B]{B} H. Baaziz, {\em Equations for the modular curve $X_1(N)$ and models of elliptic curves with torsion points}, Math. Comp. {\bf 79} (2010), no. 272, 2371--2386.

\bibitem[C]{C} M.~Chou, {\em Torsion of rational elliptic curves over quartic Galois number fields}, J. Number Theory {\bf 160} (2016), 603–628.

\bibitem[Cr]{Cr} J.~E.~Cremona, {\em Algorithms for modular elliptic curves}, Second edition, Cambridge University Press, Cambridge, 1997.

\bibitem[DE]{DE} M.~Derickx, A.~Etropolski, M.~van Hoeij, J.~S.~Morrow, D.~Zureick-Brown, {\em Sporadic cubic torsion},  Algebra Number Theory {\bf 15} (2021), no. 7, 1837--1864.

\bibitem[DH]{DH} M.~Derickx and M.~van Hoeij, {\em Gonality of the modular curve $X_1(N)$}, J. Algebra {\bf 417} (2014), 52--71.


\bibitem[H]{H} D. Husem\"{o}ller, {\em Elliptic curves.} Second edition. With appendices by Otto Forster, Ruth Lawrence and Stefan Theisen. Graduate Texts in Mathematics, {\bf 111}. Springer-Verlag, New York, 2004. xxii+487 pp.

\bibitem[J]{J} D. Jeon, {\em Trielliptic modular curves $X_1(N)$}. Acta Arith. {\bf 206} (2022), 171--188.

\bibitem[JK]{JK}  D. Jeon and C.~H. Kim, {\em Bielliptic modular curves $X_1(N)$}, Acta Arith. {\bf 112} (2004), no. 1, 75--86.

\bibitem[JKP]{JKP}  D.~Jeon, C.~H.~Kim, and E.~Park, {\em On the torsion of elliptic curves over quartic number fields}, J. London Math. Soc. (2) {\bf 74} (2006), no. 1, 1--12. 

\bibitem[JKS1]{JKS1} D. Jeon, C.~H. Kim and A. Schweizer, {\em  On the torsion of elliptic curves over cubic number fields}, Acta Arith. {\bf 113} (2004) 291--301.

\bibitem[JKS2]{JKS2} D. Jeon, C.~H. Kim and A. Schweizer, {\em  Bielliptic intermediate modular curves}, J. Pure Appl. Algebra. {\bf 224} (2020), no. 1, 272--299.



\bibitem[K]{K} S.~Kamienny, {\em Torsion points on elliptic curves and $q$-coefficients of modular forms}, Invent. Math. {\bf 109} (1992) 221--229.


\bibitem[KM]{KM} M.~A.~Kenku and F.~Momose, {\em Torsion points on elliptic curves defined over quadratic fields}, Nagoya Math. J. {\bf 109} (1988), 125--149.

 \bibitem[Ki]{Ki} H.~ Kim, {\em Functoriality for the exterior square of ${\rm GL}_4$ and the symmetric fourth of ${\rm GL}_2$}, with appendix 1 by D. Ramakrishnan and appendix 2 by H. Kim and P. Sarnak, J. Amer. Math. Soc. {\bf 16} (2003), 139--183.


\bibitem[L]{L} S. Lang, {\em Elliptic functions}, 2nd ed. Graduate Texts in Mathematics 112, Springer, New York, 1987.

\bibitem[Le]{Le} S. Lang, {\em Elliptic functions}, 2nd ed. Graduate Texts in Mathematics 112, Springer, New York, 1987.

\bibitem[LM]{LM} The LMFDB Collaboration, {\em The L-functions and modular forms database}, https://www.lmfdb.org, 2023, [Online; accessed 5 May 2023].

\bibitem[M]{M} B.~Mazur, {\em Modular curves and the Eisenstein ideal}, Inst. Hautes \'{E}tudes Sci. Publ. Math. {\bf 47} (1977), 33--186.

\bibitem[Mo]{Mo} F.~Momose, {\em Automorphism groups of the modular curves $X_1(N)$}, {unpublished preprint.}

\bibitem[N]{N} F.~Najman, {\em Torsion of rational elliptic curves over cubic fields and sporadic points on $X_1(n)$}, Math. Res. Letters, {\bf 23} (2016) 245--272.


\bibitem[S]{S} G. Stevens, {\em Stickelberger elements and modular parametrizations of elliptic curves}, Invent. Math. {\bf 98} (1989), no. 1, 75--106.

\bibitem[SW]{SW} W.~Stein and M.~Watkins, {\em A database of elliptic curves-first report} in: Algorithmic Number Theory (Sydney, 2002), 267–275, Lecture Notes in Comput. Sci. 2369, Springer, Berlin, 2002

\bibitem[Su]{Su} A.~V. Sutherland, {\em Constructing elliptic curves over finite fields with prescribed torsion}, Math. Comp. {\bf 81} (2012), 1131--1147.

\bibitem[W]{W} J.~Wang, {\em On the torsion structure of elliptic curves over cubic number fields}, Ph.D. Thesis, 2015. University of Southern California. 

\bibitem[Y]{Y} D.~Yhee, {\em Rational torsion on optimal curves and rank 1 elliptic curves}, Ph.D. Thesis, 2012. Seoul National University. 


\end{thebibliography}
\end{document}